        \newcolumntype{C}{>{\centering\arraybackslash}X}
        \newcolumntype{K}[1]{>{\centering\arraybackslash}m{#1}}
        \crefname{subsection}{Subsection}{Subsections}
        \crefname{subsection}{Subsection}{Subsections}
        \tikzset{every picture/.style=thick}
        \tikzset{state/.style = {shape=ellipse,draw,inner sep=1pt, outer sep=1pt}}
        \tikzset{vertex/.style = {shape=circle,draw,inner sep=1pt, outer sep=1pt}}
        \tikzset{edge/.style = {->, >={latex[scale=1.5]}}}
    \theoremstyle{plain}
        \newtheorem{theorem}{Theorem}[section]
        \newtheorem{lemma}[theorem]{Lemma}
    \theoremstyle{definition}
        \newtheorem{definition}[theorem]{Definition}
        \newtheorem{assumption}[theorem]{Assumption}
    \theoremstyle{remark}
        \newtheorem{remark}[theorem]{Remark}
    \title{Rational Gluing in Edge Replacement Systems}
    \author{Davide Perego \and Matteo Tarocchi}
    \date{}
    \thanks{Both the authors are members of the Gruppo Nazionale per le Strutture Algebriche, Geometriche e le loro Applicazioni (GNSAGA) of the Istituto Nazionale di Alta Matematica (INdAM).
    The first author acknowledges support from the Grant QUALIFICA by Junta de Andalucía grant number QUAL21 005 USE and from the research grant PID2022-138719NA-I00 (Proyectos de Generación de Conocimiento 2022) financed by
    the Spanish Ministry of Science and Innovation.}
    \address{Instituto de Matemáticas, Universidad de Sevilla, Ed. Celestino Mutis, Campus de Reina Mercedes, Avda. Reina Mercedes s/n, 41012 Sevilla, Spain, EU}
    \email{\href{mailto:dperego9@gmail.com}{dperego9@gmail.com}}
    \address{Dipartimento di Matematica e Applicazioni, Universit\`a degli Studi di Milano-Bicocca, Ed. U5, Via R.Cozzi 55, 20125 Milano, Italy, EU}
    \email{\href{mailto:matteo.tarocchi.math@gmail.com}{matteo.tarocchi.math@gmail.com}}
\begin{document}

\begin{abstract}
In this paper, we prove the rationality of the gluing relation of edge replacement systems, which were introduced for studying rearrangement groups of fractals.
More precisely, we describe an algorithmic procedure for building a finite state automaton that recognizes pairs of equivalent sequences that are glued in the fractal.
This fits in recent interest towards the rationality of gluing relations on totally disconnected compact metrizable spaces.
\end{abstract}

\keywords{%
    Self-similar fractals, %
    rearrangement groups, %
    rational gluing, %
    edge replacement systems, %
    finite state automata %
    }

\subjclass{20F65 (Primary), 28A80, 68Q45, 37B10, 51F99 (Secondary)}

\maketitle

%%%%%%%%%%%%%%%%%%%%%%%%%

\section*{Introduction}

In this paper we study the limit space of edge replacement systems, which arose from the context of rearrangement groups of fractals introduced in \cite{BF19}.
This is a family of Thompson-like groups acting on fractals which have now been studied and used in several papers \cite{MR4033504,airplane,IG,BasilicaDense,RearrConj,Quasisymmetries,dendrite}.
We will construct automata that recognize pairs of infinite words precisely when they are equivalent under the gluing relation that defines the limit space, meaning that the gluing relation is \textit{rational}.
Different ways to define rationality have been employed in the literature, especially in symbolic dynamics with notions like edge and sofic shifts.
By the well-known Alexandroff-Hausdorff Theorem, every compact metrizable space $X$ is a quotient of the Cantor space $\mathfrak{C}$.
When seeing $\mathfrak{C}$ as a set of infinite words on a finite alphabet, this defines a \textit{coding} on $X$:
every point of $X$ is associated to an equivalence class of infinite words of $\mathfrak{C}$ that are ``glued'' by the equivalence relation.
More generally, the Cantor space $\mathfrak{C}$ can replaced by a one-sided edge shift, which is the set of infinite words that can be read as walks on a finite directed graph.
A natural and simple example of a coding is the binary coding of the unit interval, where each number in $[0,1]$ has either one or two binary codings depending on whether it is dyadic.
While a general coding can be quite unwieldy, interesting and useful codings generally arise when there is an explicit rule for gluing two infinite words together.
%This is what motivates our interest in the \textit{rationality} of the gluing relation:
%this property essentially expresses that there is indeed a rule that described when two words glue together, and that rule is ``computable''.
What motivates the interest in the \textit{rationality} of a gluing relation is thus that there is indeed a rule that describes when two words glue together, and that rule is ``computable''.

There are several examples of this in literature, though they may be addressed with different names or be stronger or weaker versions of what we are describing here.
The Gromov boundary of a hyperbolic group has rational gluing with respect to different codings.
In \cite{CP,P}, it is proved that it is \textit{semi-Markovian} with respect to the boundary of the tree of geodesics and other trees introduced by Gromov himself (\cite{G}).
Being semi-Markovian is generally a stronger notion than rationality (see again \cite{P}).
In \cite{Perego}, it is shown that the gluing associated to the \textit{horofunction boundary} is rational. Limit spaces of contracting self similar groups have rational gluing with respect to the regular tree on which the groups act (see \cite{N2}).
In \cite{Bandt}, Bandt explores the opposite point of view, building self-similar spaces starting from automata that define gluing relations and our construction fits in Bandt's formalism.

Recently in \cite{BF19}, for their study in group theory, Belk and Forrest introduced \textit{edge replacement systems}, 
which are graph rewriting systems that produce a plethora of different self-similar fractals, such as the basilica \cite{BF15} and airplane Julia sets \cite{airplane} and Wa\.zewski dendrites \cite{dendrite}.
As we will recall in the background Section, these fractals are obtained as quotients of the Cantor space under an equivalence ``gluing'' relation.
Here we prove that this relation is rational,
as conjectured in \cite[Example 7.14]{Perego}.
We provide an explicit construction that associates to any edge replacement system a finite state automaton that recognizes pairs of equivalent infinite words.

%%%%%%%%%%%%%%%%%%%%%%%%%

\section{Background on Edge Replacement Systems}

The definitions given in this Section are the same of \cite{BF19}, except that we will associate distinguished colors to loops as explained in \cref{sub.loops} (this change is only technical and does not affect the limit spaces that can be achieved).
For a complete treatment of this topic we refer to \cite{BF19}.

\subsection{Edge Replacement Systems}
\label{sub:rep:sys}

In the remainder of this paper, a \textbf{graph} will be a sixtuplet $(V, E, \mathrm{Col}, \iota, \tau, \mathrm{c})$ where:
\begin{itemize}
    \item $V$, $E$ and $\mathrm{Col}$ are finite sets of \textbf{vertices}, \textbf{edges} and \textbf{colors}, respectively;
    \item $\iota$ and $\tau$ are maps $E \to V$, whose images $\iota(e)$ and $\tau(e)$ are called the \textbf{initial vertex} and the \textbf{terminal vertex} of the edge $e$;
    \item $\mathrm{c}$ is a map $E \to \mathrm{Col}$, and each image $\mathrm{c}(e)$ is called the \textbf{color} of the edge $e$.
\end{itemize}
When the set of colors is a singleton, we will denote by $1$ the only color and we will essentially omit $\mathrm{Col}$, $\mathrm{c}$ and any related index from the notation.
Moreover, given a graph $\Gamma$ it will often be useful to denote the set of edges of $\Gamma$ by $\Gamma^1$.

Note that we allow parallel edges (i.e., edges $e_1 \neq e_2$ such that $\iota(e_1) = \iota(e_2)$ and $\tau(e_1) = \tau(e_2)$) and loops (i.e., edges $e$ such that $\iota(e) = \tau(e)$).

\begin{definition}
\label{def:rep:sys}
A \textbf{replacement system} is a tuple $(\mathrm{Col}, \Gamma_0, \{\Gamma_{\mathrm{c}}\}_{\mathrm{c} \in \mathrm{Col}})$ where:
\begin{itemize}
    \item $\mathrm{Col}$ is a finite set of \textbf{colors};
    \item $\Gamma_0$ is a finite graph, called the \textbf{base graph};
    \item for each $\mathrm{c} \in \mathrm{Col}$, $\Gamma_{\mathrm{c}}$ is a finite graph, called the \textbf{$\mathrm{c}$ replacement graph};
    \item each replacement graph $\Gamma_{\mathrm{c}}$ is equipped with two distinct \textbf{boundary vertices} $\iota_{\mathrm{c}}$ and $\tau_{\mathrm{c}}$.
\end{itemize}
\end{definition}

Examples of replacement systems are depicted in \cref{fig:replacement:interval,,fig:replacement:dendrite}.

\begin{figure}
\centering
\begin{tikzpicture}
    \node at (-.667,0) {$\Gamma_0 =$};
    \node[vertex] (i) at (0,0) {};
    \node[vertex] (t) at (2,0) {};
    \draw[edge] (i) to node[above]{$S$} (t);
    \begin{scope}[xshift=6.5cm]
    \node at (-2,0) {$\Gamma_1 =$};
    \node[vertex] (l) at (180:1.3) {};
    \draw (l) node[above]{$\iota$};
    \node[vertex] (c) at (0:0) {};
    \node[vertex] (r) at (0:1.333) {};
    \draw (r) node[above]{$\tau$};
    \draw[edge] (l) to node[above]{$0$} (c);
    \draw[edge] (c) to node[above]{$1$} (r);
    \end{scope}
\end{tikzpicture}
\caption{The interval replacement system $\mathcal{I}$.}
\label{fig:replacement:interval}
\end{figure}
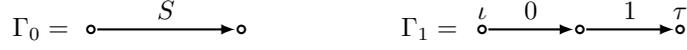

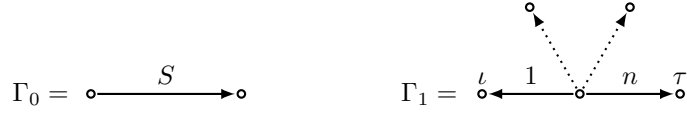
\begin{figure}
\centering
\begin{tikzpicture}
    \node at (-.667,0) {$\Gamma_0 =$};
    \node[vertex] (i) at (0,0) {};
    \node[vertex] (t) at (2,0) {};
    \draw[edge] (i) to node[above]{$S$} (t);
    \begin{scope}[xshift=6.5cm]
    \node at (-2,0) {$\Gamma_1 =$};
    \node[vertex] (l) at (180:1.3) {};
    \draw (l) node[above]{$\iota$};
    \node[vertex] (c) at (0:0) {};
    \node[vertex] (tl) at (60:1.333) {};
    \node[vertex] (tr) at (120:1.333) {};
    \node[vertex] (r) at (0:1.333) {};
    \draw (r) node[above]{$\tau$};
    \draw[edge] (c) to node[above]{$1$} (l);
    \draw[edge,dotted] (c) to (tl);
    \draw[edge,dotted] (c) to (tr);
    \draw[edge] (c) to node[above]{$n$} (r);
    \end{scope}
\end{tikzpicture}
\caption{The Dendrite replacement systems $\mathcal{D}_n$.}
\label{fig:replacement:dendrite}
\end{figure}

\subsubsection{Colors and Loops}
\label{sub.loops}

Throughout this paper, we will make the following assumption on the graph expansions of our edge replacement systems, as it will make it easier to build our gluing automaton.

\begin{assumption}
\label{ass.loops}
We assume that edges of the same color are either all loops or all not loops.
\end{assumption}

Under this assumption, it is natural to identify the two boundary vertices for colors that represent loops.
Each replacement graph $\Gamma_c$ will then equipped with either one or two \textbf{boundary vertices}:
a single vertex $\lambda_c$ for loops and two distinct vertices $\iota_c$ and $\tau_c$ for non-loops.

\begin{remark}
This assumption is not really a restriction on the limit spaces that we can produce, nor on their rearrangement groups.
Indeed, given a generic edge replacement system, we can always split a color $c$ into two colors $c_\text{loop}$ and $c_\text{non-loop}$ and create a new edge replacement system that produces the same limit space and the same rearrangement group.
This creates a replacement system that satisfies \cref{ass.loops} because whether an edge is a loop or not only depends on whether it is a loop of the base graph or of some replacement graph.
For example, the original basilica replacement system \cref{fig:basilica:replacement:non:loops} becomes the one portrayed in \cref{fig:replacement:basilica}.

More precisely, the new edge replacement system $\mathcal{R}^*$ is obtained
%by coloring by $c_\text{loop}$ or $c_\text{non-loop}$ the $c$-colored edges of the graphs of the original edge replacement system $\mathcal{R}$ depending on whether they are loops or not, after having identified $\iota_c$ and $\tau_c$ with a unique vertex $\lambda_c$ in the $c_\text{loop}$ replacement graph.
by replacing each color $c$ by two new colors $c_\text{loop}$ and $c_\text{non-loop}$, where $c_\text{loop}$ is used on loops that were formerly of color $c$ and $c_\text{non-loop}$ is used on every other edge of color $c$.
Even if we will not discuss rearrangement groups here, it is also not hard to see that this modification does not affect the rearrangement group.
\end{remark}

\begin{figure}
\centering
\begin{tikzpicture}
    \node at (-.8,0) {$\Gamma_0 =$};
    \node[vertex] (i) at (1,0) {};
    \draw[edge] (i) to[out=150, in=-150, min distance=1.2cm] node[above left]{$L$} (i);
    \draw[edge] (i) to[out=-30, in=30, min distance=1.2cm] node[above right]{$R$} (i);
    \begin{scope}[xshift=4.25cm]
    \node at (-.825,0) {$\Gamma_{1} =$};
    \node[vertex] (l) at (0,0) {};
    \draw (l) node[above]{$\iota_{1}$};
    \node[vertex] (c) at (1,0) {};
    \node[vertex] (r) at (2,0) {};
    \draw (r) node[above]{$\tau_{1}$};
    \draw[edge] (l) to node[above]{$1$} (c);
    \draw[edge] (c) to[out=60, in=120, min distance=1.2cm] node[above right]{$2$} (c);
    \draw[edge] (c) to node[above]{$3$} (r);
    \end{scope}
\end{tikzpicture}
\caption{The original Basilica replacement system.}
\label{fig:basilica:replacement:non:loops}
\end{figure}
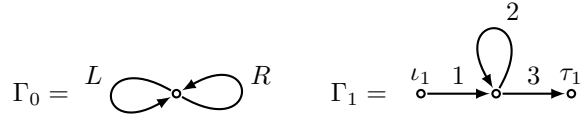

\begin{figure}
\centering
\begin{tikzpicture}
    \node at (-.8,0) {$\Gamma_0 =$};
    \node[vertex] (i) at (1,0) {};
    \draw[edge,red] (i) to[out=150, in=-150, min distance=1.2cm] node[above left]{$L$} (i);
    \draw[edge,red] (i) to[out=-30, in=30, min distance=1.2cm] node[above right]{$R$} (i);
    \begin{scope}[xshift=4.25cm]
    \node at (-.825,0) {$\Gamma_{\textcolor{blue}{1}} =$};
    \node[vertex] (l) at (0,0) {};
    \draw (l) node[above]{$\iota_{\textcolor{blue}{1}}$};
    \node[vertex] (c) at (1,0) {};
    \node[vertex] (r) at (2,0) {};
    \draw (r) node[above]{$\tau_{\textcolor{blue}{1}}$};
    \draw[edge,blue] (l) to node[above]{$1$} (c);
    \draw[edge,red] (c) to[out=60, in=120, min distance=1.2cm] node[above right]{$2$} (c);
    \draw[edge,blue] (c) to node[above]{$3$} (r);
    \end{scope}
    \begin{scope}[xshift=8.5cm]
    \node at (-1,0) {$\Gamma_{\textcolor{red}{2}} =$};
    \node[vertex] (l) at (0,0) {};
    \draw (l) node[left]{$\lambda_{\textcolor{red}{2}}$};
    \node[vertex] (c) at (1,0) {};
    \draw[edge,blue] (l) to[out=290, in=240] node[below]{$4$} (c);
    \draw[edge,red] (c) to[out=-30, in=30, min distance=1.2cm] node[above right]{$5$} (c);
    \draw[edge,blue] (c) to[out=110,in=60] node[above]{$6$} (l);
    \end{scope}
\end{tikzpicture}
\caption{The modified Basilica replacement system $\mathcal{B}$.}
\label{fig:replacement:basilica}
\end{figure}
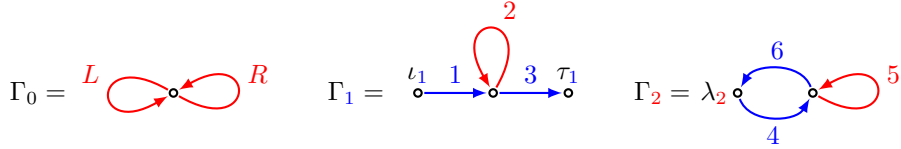

\subsubsection{Graph Expansions}

A replacement system defines a rewriting system in the following way.
We can \textbf{expand} the base graph $\Gamma_0$ by replacing one of its edges $e$ with the $\mathrm{c}(e)$-colored replacement graph $\Gamma_{\mathrm{c}(e)}$, attaching the boundary vertices of $\Gamma_{\mathrm{c}(e)}$ in place of the initial and terminal vertices $\iota_{\mathrm{c}}(e)$ and $\tau_{\mathrm{c}}(e)$ of the edge $e$.
Note that, because we identify the boundary vertices for colors associated to loops as explained in \cref{sub.loops}, this is coherent with the amount of vertices that need to be attached (one for loops, two otherwise).
This very same procedure can be applied to any graph expansion of the base graph, which allows us to obtain further graph expansions.
For example, \cref{fig:expansions} depicts graph expansions of the replacement systems $\mathcal{I}$ and $\mathcal{B}$ from \cref{fig:replacement:basilica,,fig:replacement:interval}, respectively.
This prompts the following useful definition.

\begin{definition}
\label{def.full.expansion}
The \textbf{full expansion sequence} is the sequence of graphs $E_m$ obtained by expanding, at each step, every edge of the previous graph, starting with the base graph $E_0 = \Gamma_0$.
\end{definition}

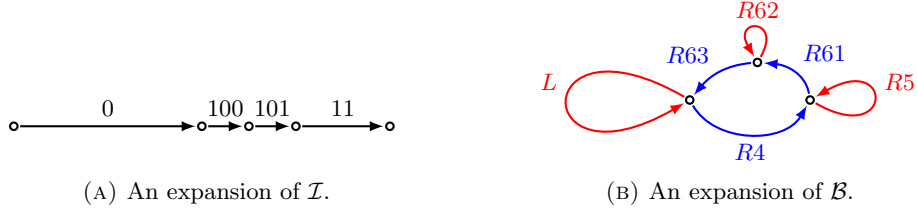
\begin{figure}
    \centering
    \begin{subfigure}[B]{.45\textwidth}
        \centering
        \begin{tikzpicture}[scale=5]
            \node[vertex] (0) at (0,0) {};
            \node[vertex] (1/2) at (.5,0) {};
            \node[vertex] (5/8) at (.625,0) {};
            \node[vertex] (3/4) at (.75,0) {};
            \node[vertex] (1) at (1,0) {};
            \draw[edge] (0) to node[above]{\small$0$} (1/2);
            \draw[edge] (1/2) to node[above]{\small$100$} (5/8);
            \draw[edge] (5/8) to node[above]{\small$101$} (3/4);
            \draw[edge] (3/4) to node[above]{\small$11$} (1);
        \end{tikzpicture}
        \vspace{12pt}
        \caption{An expansion of $\mathcal{I}$.}
    \end{subfigure}
    \hfill
    \begin{subfigure}[B]{.45\textwidth}
        \centering
        \begin{tikzpicture}[scale=2]
            \node[vertex] (i) at (1,0) {};
            \node[vertex] (r) at (1.8,0) {};
            \node[vertex] (rt) at (1.45,.25) {};
            \draw[edge,red] (i) to[out=150, in=-150, min distance=1.2cm] node[above left]{\small$L$} (i);
            \draw[edge,blue] (i) to[out=300, in=240] node[below]{\small$R4$} (r);
            \draw[edge,red] (r) to[out=-30, in=30, min distance=.6cm] node[above right]{\small$R5$} (r);
            \draw[edge,blue] (r) to[out=100,in=-5] node[above right]{\small$R61$} (rt);
            \draw[edge,red] (rt) to[out=60, in=120, min distance=.3cm] node[above]{\small$R62$} (rt);
            \draw[edge,blue] (rt) to[out=185,in=50] node[above left]{\small$R63$} (i);
        \end{tikzpicture}
        \vspace{-12pt}
        \caption{An expansion of $\mathcal{B}$.}
        \label{fig:expansion:B}
    \end{subfigure}
    \caption{Examples of graph expansions.}
    \label{fig:expansions}
\end{figure}

\subsection{Limit Spaces}
\label{sub:lim:spc}

In this Section we identify the edges of graph expansions with certain finite words, then we consider infinite words and define the limit space as a quotient under a ``gluing'' relation.
For the sake of clarity, from here on we will use the term \textit{word} to refer to finite words and the term \textit{sequence} for infinite words.

\subsubsection{The Color Graph}

Let $\mathcal{R} = (\mathrm{Col}, \Gamma_0, \{\Gamma_{\mathrm{c}}\}_{\mathrm{c} \in \mathrm{Col}})$ be a fixed replacement system.
When performing expansions, each edge is represented by some finite word in the alphabet consisting of all edges of the base and replacement graphs.
For instance, in the example portrayed in \cref{fig:expansion:B} the word $R4$ means that we have expanded the edge $R$ of the base graph and then we consider the edge corresponding to $4$ among the newly added edges.
More precisely, the set of those words that correspond to edges of graph expansions is denoted by $E_\mathcal{R}$, and it can be described with the aid of the following definition.

\begin{definition}
\label{def:color:automaton}
The \textbf{color graph} $A_\mathcal{R}$ of the replacement system $\mathcal{R}$ is the directed (non-colored) graph consisting of:
\begin{itemize}
    \item a vertex $q(\mathrm{c})$ for each color $\mathrm{c} \in \mathrm{Col}$ and an additional vertex $q(0)$;
    \item an edge $q(\zeta) \rightarrow q(\mathrm{c})$ for each $\mathrm{c}$-colored edge of $\Gamma_\zeta$, where $\zeta \in \{0\} \cup \mathrm{Col}$.
\end{itemize}
\end{definition}

Note that, from every vertex $q(\mathrm{c_1})$, the color graph $A_\mathcal{R}$ has an outgoing edge for each edge $e$ of the replacement graph $\Gamma_{\mathrm{c_1}}$. 
Each of these edges of $A_\mathcal{R}$ ends at the vertex $q(\mathrm{c_2})$, where $\mathrm{c_2}$ is the color of $e$.
With this in mind, it is clear that the set of edges $E_\mathcal{R}$ corresponds to the set of finite directed walks on $A_\mathcal{R}$ starting from $q(0)$.
%This set of $E_\mathcal{R}$ is called the \textbf{language} of the color graph.

\subsubsection{The Limit Space}

Since longer words in $E_\mathcal{R}$ correspond to ``smaller'' edges of the graph expansions, it is natural to consider the infinite sequences of these words.

\begin{definition}
Given a replacement system $\mathcal{R}$, its \textbf{symbol space} $\Omega_\mathcal{R}$ is the (one-sided) edge shift on $A_\mathcal{R}$ starting at $q(0)$, i.e., $\Omega_\mathcal{R}$ is the set of infinite directed walks on the color graph $A_\mathcal{R}$ that start from the vertex $q(0)$.
\end{definition}

We endow $\Omega_\mathcal{R}$ with the product topology.
Under the hypothesis of an expanding replacement system (\cref{def:exp:rep:sys} below, which will be assumed for the remainder of this paper), $\Omega_\mathcal{R}$ is always going to be a Cantor space with this topology.

The following definition translates the adjacencies of edges in the base and replacement graphs into a relation between sequences of the symbol space $\Omega_\mathcal{R}$.

\begin{definition}
\label{def:gluing}
Two sequences $\alpha$ and $\beta$ of $\Omega_\mathcal{R}$ are equivalent under the \textbf{gluing relation} if $\alpha_1 \dots \alpha_n$ and $\beta_1 \dots \beta_n$ are incident on a common vertex for every large enough $n \in \mathbb{N}$.
In this case, we write $\alpha \sim \beta$.
\end{definition}

This relation $\sim$ is supposed to glue together those sequences that represent the same point in the limit space.
If two distinct sequences $\alpha$ and $\beta$ of $\Omega_\mathcal{R}$ are glued by $\sim$, then they must share a (possibly trivial) common prefix $x$, after which $x_1 \dots x_k \alpha_{k+1} \dots \alpha_{m}$ and $x_1 \dots x_k \beta_{k+1} \dots \beta_{m}$ must be incident on a common vertex for all $m \geq k$.
For example, in the interval replacement system $\mathcal{I}$, for every finite prefix $p$ one has $p 0 \bar{1} \sim p 1 \bar{0}$, 
and these are all the gluings on $\mathcal{I}$.

The gluing relation is not always an equivalence relation, but it is by \cite[Proposition 1.9]{BF19} when the replacement system satisfies the following conditions.

\begin{definition}
\label{def:exp:rep:sys}
A replacement system $\mathcal{R}$ is \textbf{expanding} if:
\begin{enumerate}
    \item the base and replacement graphs have no isolated vertices;
    \item there is no edge between $\iota_c$ and $\tau_c$ in any of the replacement graphs;
    \item each replacement graph has at least two edges and at least one vertex that is not a boundary vertex;
\end{enumerate}
\end{definition}

This condition needs to be checked on the original replacement system, \textit{before} applying the modification described in \cref{sub.loops} that guarantees \cref{ass.loops}.

\begin{definition}
%Given an expanding replacement system $\mathcal{R}$, its \textbf{limit space} is the quotient $X_\mathcal{R} = \Omega_\mathcal{R} / \sim$.
%We write $\pi \colon \Omega_\mathcal{R} \to X_\mathcal{R}$ for the quotient map under $\sim$.
The \textbf{limit space} of an expanding replacement system $\mathcal{R}$ is the quotient $X_\mathcal{R} = \Omega_\mathcal{R} / \sim$.
\end{definition}

By \cite[Theorem 1.25]{BF19}, limit spaces are non-empty compact metrizable spaces.
The limit space for the replacement system $\mathcal{I}$ from \cref{fig:replacement:interval} is the (topological) interval.
The limit space of $\mathcal{D}_n$ from \cref{fig:replacement:dendrite} is the Wa\.zewski dendrite $D_n$ of order $n$, and \cref{fig:dendrite}\footnote{\href{https://commons.wikimedia.org/wiki/File:Julia_IIM_1.jpg}{image} in \cref{fig:dendrite} by Adam Majewski, licensed under \href{https://creativecommons.org/licenses/by/3.0}{CC BY 3.0}} depicts $D_3$;
see \cite{dendrite} for more details on these spaces and their rearrangement groups.
The limit space of $\mathcal{B}$ from \cref{fig:replacement:basilica} (or, equivalently, \cref{fig:basilica:replacement:non:loops}) is the Basilica limit space, which is homeomorphic to the Julia set for the quadratic polynomial map $z^2 - 1$, depicted in \cref{fig:basilica}\footnote{image in \cref{fig:basilica} by Jim Belk and Bradley Forrest from \cite{BF15}};
see \cite{BF15,BF19} for more details on the limit space and its rearrangement group.

\begin{figure}
    \centering
    \begin{subfigure}[B]{.45\textwidth}
        \centering
        \includegraphics[width=\textwidth]{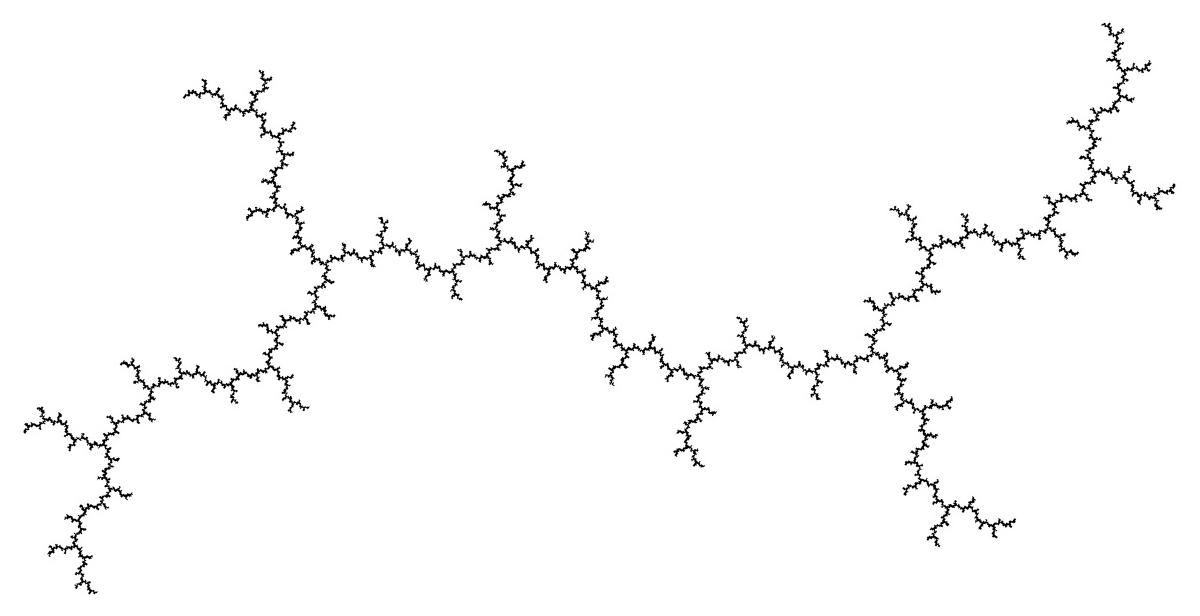}
        \caption{The Wa\.zewski dendrite $D_3$.}
        \label{fig:dendrite}
    \end{subfigure}
    \hfill
    \begin{subfigure}[B]{.45\textwidth}
        \centering
        \includegraphics[width=\textwidth]{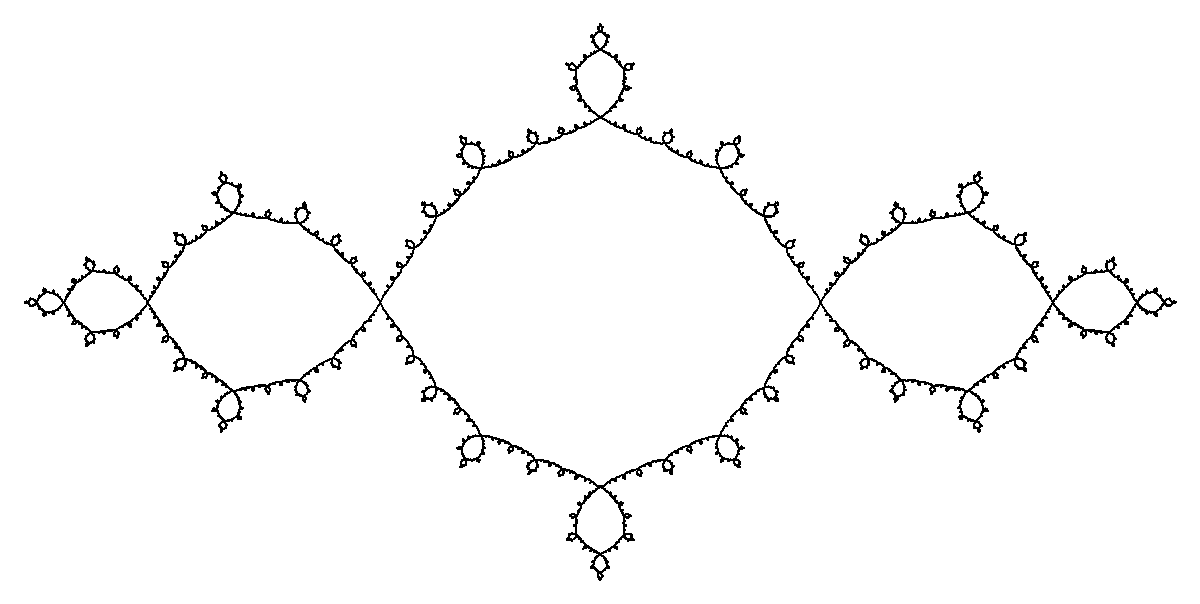}
        \caption{The Basilica.}
        \label{fig:basilica}
    \end{subfigure}
    \caption{Example of limit spaces.}
\end{figure}

%%%%%%%%%%%%%%%%%%%%%%%%%

\section{The Gluing Automaton}

In this section, after briefly recalling what an automaton is and how it can be used to describe a relation between infinite words, we construct the gluing automaton for edge replacement systems.

A \textit{partial function} is a binary relation between two sets that associates to every element of the first set at most one element of the second. 

\begin{definition}
Let $\Sigma$ be a finite set called \textit{alphabet}.
A (synchronous deterministic finite state) \textbf{automaton} is a quadruple $(\Sigma, Q, \rightarrow, q_{0})$ with $Q$ a finite set whose elements are called \textit{states}, $q_{0} \in Q$ called \textit{initial state} and a partial function between $Q \times \Sigma $ and $Q$ which is called \textit{transition function}.
\end{definition}

We write $q \xrightarrow{x} q'$ if the transition function features $(q,x) \mapsto q'$.
A \textbf{run} of the automaton is a sequence of transitions $q_0 \xrightarrow{x_1} q_1 \xrightarrow{x_2} \dots$ and an automaton \textbf{recognizes} a sequence $x_1 x_2 \dots$ when there exists such a run.
With a slight abuse of notation, the same concept naturally applies to finite words too.
An automaton \textbf{recognizes} a set $\Omega$ when it recognizes a sequence if and only if it belongs to $\Omega$.

\begin{definition}
Let $\Omega$ be a set of infinite sequences.
We say that an equivalence relation on $\Omega$ is \textbf{rational} when there exists an automaton that recognizes a pair $(w, v)$ if and only if the two elements $w, v \in \Omega$ are related.
\end{definition}

Note that an automaton that recognizes a relation on $\Sigma^\omega = \{ x_1 x_2 \ldots \mid x_i \in \Sigma \}$
has $\Sigma^2$ as alphabet.
A sequence $(x_1,y_1) (x_2,y_2) \dots$ represents a pair of sequences $x_1 x_2 \ldots, y_1 y_2 \ldots \in \Omega_\mathcal{R}$.

\subsection*{Construction of the Gluing Automaton}

Now consider an expanding replacement system $\mathcal{R}$ with colors $\mathrm{Col} = \{1, \dots, k\}$, base graph $\Gamma_0$ and replacement graphs $\Gamma_1, \dots, \Gamma_k$.
If the replacement system has loops, from here on we will always consider the replacement system where the boundary vertices for colors associated to loops are identified, as explained in \cref{sub.loops}.
The gluing automaton $\mathrm{Gl}_\mathcal{R} = (\Sigma^2, Q, \rightarrow, q_0(0))$ is built as follows.

\subsubsection*{The alphabet}

$\Sigma^2$ consists of all possible pairs of symbols of $\Sigma$, where $\Sigma$ has exactly one symbol for each edge of the base and replacement graphs, i.e.,
\[ \Sigma^2 = \{ (x,y) \mid x,y \in \cup_{i = 0, \dots, k} \Gamma_i^1 \}. \]
When processing one of the two entries of the $l$-th letter of a word in this alphabet, it will be natural to refer to that entry as if it were an edge of the $l$-th graph of the full expansion sequence (\cref{def.full.expansion}).
In this way, the two entries of the $l$-th letter of a word in $\Sigma^2$ represent two edges of the same graph, so in what follows we will simply describe their possible adjacency in the $l$-th graph of the full expansion sequence without explicitly mentioning the graph itself.

\subsubsection*{The set of states}

$Q$ is the union of the two following sets $Q_0$ and $Q_1$:
\[ Q_0 = \big\{ q_0(i) \mid i = 0, 1, \dots, k \big\}, \]
\[ Q_1 = \big\{ q_1 \big(\begin{smallmatrix} i & \gamma \\ j & \delta \end{smallmatrix}\big) \mid i, j \in \mathrm{Col}, \gamma, \delta \in \mathcal{E}, \gamma \neq \mathrm{db}^-, \delta=\mathrm{db}^\pm \iff \gamma=\mathrm{db}^+ \big\}, \]
where $\mathcal{E} = \{\mathrm{in}, \mathrm{out}, \mathrm{lp}, \mathrm{db}^+, \mathrm{db}^- \}$.
The initial state is $q_0(0) \in Q_0$.

The following Subsection will describe in more detail what these states mean.
The intuition is that, while processing the edges of our two sequences, the first set $Q_0$ consists of an initial state $q_0(0)$ along with a state $q_0(i)$ for each color $i$ that indicates that the two entries being processed are the same $i$-colored edge, while the second set $Q_1$ has a state for each quadruple $\big(\begin{smallmatrix} i & \gamma \\ j & \delta \end{smallmatrix}\big)$ where $i, j$ are the colors of the edges and $\gamma, \delta$ distinguish between the types of adjacency between the edges.
The symbols $\mathrm{lp}$ and $\mathrm{db}$ stand for \textit{loop} and \textit{double}, respectively, and their meaning will be made clear when describing the transition functions.

\subsubsection*{The transitions}

The transition function $\rightarrow$ is the partial mapping $Q \times \Sigma \to Q$ defined as follows, where we distinguish between three different types of transitions.
\begin{description}
    \item[Type $\bm{0}$]
        $q_0(i) \xrightarrow{(e,e)} q_0 (\mathrm{col}(e))$ for all $i \in \{0\} \cup \mathrm{Col}$ and $e \in \Gamma_i^1$.
    \item[Type $\bm{1}$]
        $q_0(i) \xrightarrow{(a,b)} q_1 \big(\begin{smallmatrix} \mathrm{col}(a) & \alpha \\ \mathrm{col}(b) & \beta \end{smallmatrix}\big)$ for all $i \in \{0\} \cup \mathrm{Col}$ and $a \neq b$ in $\Gamma_i^1$, where $\alpha$ and $\beta$ depend on the adjacency between $a$ and $b$ in $\Gamma_i$ as shown in \cref{tab:transition:type:1}.
    \item[Type $\bm{2}$]
        $q_1 \big(\begin{smallmatrix} i & \gamma \\ j & \delta \end{smallmatrix}\big) \xrightarrow{(a,b)} q_1 \big(\begin{smallmatrix} \mathrm{col}(a) & \alpha \\ \mathrm{col}(b) & \beta \end{smallmatrix}\big)$ for all $i,j \in \{0\} \cup \mathrm{Col}$ whenever $a \in \Gamma_i^1$ and $b \in \Gamma_j^1$ satisfy the conditions described in \cref{tab:transition:type:2}, which also depend on $\gamma$ and $\delta$ and determine $\alpha$ and $\beta$.
\end{description}

The meaning of these transitions (especially Type 2) will be made clear in \cref{sec:gl:aut}.
The idea is that Type 0 transitions simply induce a copy of the color graph (\cref{def:color:automaton}), Type 1 transitions arise when two sequences first diverge and Type 2 transitions describe the adjacency of sequences after their divergence.
Note that, when processing the pair $(a,b)$ in a state $q_0(i)$, there is a Type 1 transition if and only if $a$ and $b$ are adjacent in $\Gamma_i$.

Observe that loops at $\lambda_i$ may arise when the original replacement system (before gluing the initial and terminal vertices as explained in \cref{sub.loops}) had loops at $\iota$ or at $\tau$. Since the replacement system is expanding, this is the only possible occurrence of the column $\iota(a) = \tau(a) = \lambda_i$ (row $\iota(b) = \tau(b) = \lambda_j$).

Moreover, in Type 2 transitions $\alpha$ and $\beta$ will never be $\mathrm{db}^+$ nor $\mathrm{db}^-$, since the replacement system is expanding.
Furthermore, note that when $\gamma$ (or $\delta$) is $\mathrm{db}^+$ or $\mathrm{db}^-$ one needs to check the incidence of $a$ (or $b$) with both the initial and terminal vertices $\iota_i$ and $\tau_i$ (or $\iota_j$ and $\tau_j$) of the replacement graph.
However, since the replacement system is expanding, an edge of a replacement graph cannot be incident on both the initial and terminal vertices;
thus, at most one of these can occur, so there is no ambiguity in the table.

\begin{table}
\centering
\begin{tabularx}{\textwidth}{C|C|C|C}
    \begin{tikzpicture}
        \node[vertex] (A) at (0,0) {};
        \node[vertex] (B) at (1,0) {};
        \node[vertex] (C) at (2,0) {};
        \draw[edge] (A) to node[above]{$a$} (B);
        \draw[edge] (B) to node[above]{$b$} (C);
    \end{tikzpicture}
    &%
    \begin{tikzpicture}
        \node[vertex] (A) at (0,0) {};
        \node[vertex] (B) at (1,0) {};
        \node[vertex] (C) at (2,0) {};
        \draw[edge] (A) to node[above]{$a$} (B);
        \draw[edge] (C) to node[above]{$b$} (B);
    \end{tikzpicture}
    &%
    \begin{tikzpicture}
        \node[vertex] (A) at (0,0) {};
        \node[vertex] (B) at (1,0) {};
        \node[vertex] (C) at (2,0) {};
        \draw[edge] (B) to node[above]{$a$} (A);
        \draw[edge] (B) to node[above]{$b$} (C);
    \end{tikzpicture}
    &%
    \begin{tikzpicture}
        \node[vertex] (A) at (0,0) {};
        \node[vertex] (B) at (1,0) {};
        \node[vertex] (C) at (2,0) {};
        \draw[edge] (B) to node[above]{$a$} (A);
        \draw[edge] (C) to node[above]{$b$} (B);
    \end{tikzpicture}%
    \vspace{.15cm}
    \\
    $q_1 \big(\begin{smallmatrix} \mathrm{col}(a) & \mathrm{in} \\ \mathrm{col}(b) & \mathrm{out} \end{smallmatrix}\big)$%
    &%
    $q_1 \big(\begin{smallmatrix} \mathrm{col}(a) & \mathrm{in} \\ \mathrm{col}(b) & \mathrm{in} \end{smallmatrix}\big)$%
    &%
    $q_1 \big(\begin{smallmatrix} \mathrm{col}(a) & \mathrm{out} \\ \mathrm{col}(b) & \mathrm{out} \end{smallmatrix}\big)$%
    &%
    $q_1 \big(\begin{smallmatrix} \mathrm{col}(a) & \mathrm{out} \\ \mathrm{col}(b) & \mathrm{in} \end{smallmatrix}\big)$%
    \vspace{.2cm}
    \\\hline
    \begin{tikzpicture}
        \node[vertex] (A) at (0,0) {};
        \node[vertex] (B) at (1,0) {};
        \draw[edge] (A) to node[above]{$a$} (B);
        \draw[edge] (B) to[loop right, min distance=1cm] node[right]{$b$} (B);
    \end{tikzpicture}
    &%
    \begin{tikzpicture}
        \node[vertex] (A) at (0,0) {};
        \node[vertex] (B) at (1,0) {};
        \draw[edge] (B) to node[above]{$a$} (A);
        \draw[edge] (B) to[loop right, min distance=1cm] node[right]{$b$} (B);
    \end{tikzpicture}
    &%
    \begin{tikzpicture}
        \node[vertex] (B) at (0,0) {};
        \node[vertex] (C) at (1,0) {};
        \draw[edge] (B) to[loop left, min distance=1cm] node[left]{$a$} (B);
        \draw[edge] (B) to node[above]{$b$} (C);
    \end{tikzpicture}
    &%
    \begin{tikzpicture}
        \node[vertex] (B) at (0,0) {};
        \node[vertex] (C) at (1,0) {};
        \draw[edge] (B) to[loop left, min distance=1cm] node[left]{$a$} (B);
        \draw[edge] (C) to node[above]{$b$} (B);
    \end{tikzpicture}%
    \vspace{.05cm}
    \\
    $q_1 \big(\begin{smallmatrix} \mathrm{col}(a) & \mathrm{in} \\ \mathrm{col}(b) & \mathrm{lp} \end{smallmatrix}\big)$%
    &%
    $q_1 \big(\begin{smallmatrix} \mathrm{col}(a) & \mathrm{out} \\ \mathrm{col}(b) & \mathrm{lp} \end{smallmatrix}\big)$%
    &%
    $q_1 \big(\begin{smallmatrix} \mathrm{col}(a) & \mathrm{lp} \\ \mathrm{col}(b) & \mathrm{out} \end{smallmatrix}\big)$%
    &%
    $q_1 \big(\begin{smallmatrix} \mathrm{col}(a) & \mathrm{lp} \\ \mathrm{col}(b) & \mathrm{in} \end{smallmatrix}\big)$%
    \vspace{.2cm}
    \\\hline
    \begin{tikzpicture}
        \node[vertex] (B) at (0,0) {};
        \draw[edge] (B) to[loop left, min distance=1cm] node[left]{$a$} (B);
        \draw[edge] (B) to[loop right, min distance=1cm] node[right]{$b$} (B);
    \end{tikzpicture}
    &%
    \begin{tikzpicture}
        \node[vertex] (A) at (0,0) {};
        \node[vertex] (C) at (1.5,0) {};
        \draw[edge] (A) to[bend right] node[above]{$a$} (C);
        \draw[edge] (A) to[bend left] node[above]{$b$} (C);
    \end{tikzpicture}
    &%
    \begin{tikzpicture}
        \node[vertex] (A) at (0,0) {};
        \node[vertex] (C) at (1.5,0) {};
        \draw[edge] (A) to[bend left] node[above]{$a$} (C);
        \draw[edge] (C) to[bend left] node[above]{$b$} (A);
    \end{tikzpicture}
    &%
    \vspace{.2cm}
    \\
    $q_1 \big(\begin{smallmatrix} \mathrm{col}(a) & \mathrm{lp} \\ \mathrm{col}(b) & \mathrm{lp} \end{smallmatrix}\big)$%
    &%
    $q_1 \big(\begin{smallmatrix} \mathrm{col}(a) & \mathrm{db}^+ \\ \mathrm{col}(b) & \mathrm{db}^+ \end{smallmatrix}\big)$%
    &%
    $q_1 \big(\begin{smallmatrix} \mathrm{col}(a) & \mathrm{db}^+ \\ \mathrm{col}(b) & \mathrm{db}^- \end{smallmatrix}\big)$%
    &%
    \vspace{.2cm}
\end{tabularx}
\caption{Type 1 transitions and the types of adjacency.}
\label{tab:transition:type:1}
\end{table}

\begin{sidewaystable}
%\begin{table}
\renewcommand{\arraystretch}{2}
\centering
\scriptsize
%\begin{tabular}{*{28}{|K{.365cm}}|}
\begin{tabular}{*{20}{|c}|}
%\begin{tabular}{|c|c|c|c|c|c|c|c|c|c|c|c|c|c|c|c|c|c|c|c|c|c|c|c|c|c|c|c|}
%\begin{tabularx}{\textwidth}{|C|C|C|C|C|C|C|C|C|C|C|C|C|C|C|C|C|C|C|C|C|C|C|C|C|C|C|C|}
    \cline{1-20}
    \multicolumn{3}{|c|}{} & $\gamma$ &%
    \multicolumn{3}{c|}{$\mathrm{in}$}&%
    \multicolumn{3}{c|}{$\mathrm{out}$}&%
    \multicolumn{3}{c|}{$\mathrm{lp}$}&%
    \multicolumn{6}{c|}{$\mathrm{db}^+$}&%
    \multicolumn{1}{c|}{$\mathrm{db}^-$}\\
    \cline{4-20}
    \multicolumn{3}{|c|}{} & $\iota(a)$ &%
    $\tau_i$ & & $\tau_i$ &%
    $\iota_i$ & & $\iota_i$ &%
    $\lambda_i$ & & $\lambda_i$ &%
    $\tau_i$ & & $\tau_i$ & $\iota_i$ & & $\iota_i$ & any%
    \\
    \cline{4-20}
    \multicolumn{3}{|c|}{} & $\tau(a)$ &%
    & $\tau_i$ & $\tau_i$ &%
    & $\iota_i$ & $\iota_i$ &%
    & $\lambda_i$ & $\lambda_i$ &%
    & $\tau_i$ & $\tau_i$ & & $\iota_i$ & $\iota_i$ & any%
    \\
    \cline{1-20}
    $\delta$ & $\iota(b)$ & $\tau(b)$ & &%
    \multicolumn{3}{c|}{} & \multicolumn{3}{c|}{} & \multicolumn{3}{c|}{} & \multicolumn{6}{c|}{} &\\
    \cline{1-20}
    %
    % 1st row
    \multirow{3}{*}{$\mathrm{in}$} &
    $\tau_j$ & & &%
    $\begin{smallmatrix} \mathrm{out} \\ \mathrm{out} \end{smallmatrix}$ & $\begin{smallmatrix} \mathrm{in} \\ \mathrm{out} \end{smallmatrix}$ & $\begin{smallmatrix} \mathrm{lp} \\ \mathrm{out} \end{smallmatrix}$ &%
    $\begin{smallmatrix} \mathrm{out} \\ \mathrm{out} \end{smallmatrix}$ & $\begin{smallmatrix} \mathrm{in} \\ \mathrm{out} \end{smallmatrix}$ & $\begin{smallmatrix} \mathrm{lp} \\ \mathrm{out} \end{smallmatrix}$ &%
    $\begin{smallmatrix} \mathrm{out} \\ \mathrm{out} \end{smallmatrix}$ & $\begin{smallmatrix} \mathrm{in} \\ \mathrm{out} \end{smallmatrix}$ & $\begin{smallmatrix} \mathrm{lp} \\ \mathrm{out} \end{smallmatrix}$ &%
    & & & & & &%
    \multirow{21}{*}{\rotatebox[origin=c]{270}{empty}}\\
    \cline{2-3}\cline{5-19}
    %
    % 2nd row
    & & $\tau_j$ & &%
    $\begin{smallmatrix} \mathrm{out} \\ \mathrm{in} \end{smallmatrix}$ & $\begin{smallmatrix} \mathrm{in} \\ \mathrm{in} \end{smallmatrix}$ & $\begin{smallmatrix} \mathrm{lp} \\ \mathrm{in} \end{smallmatrix}$ &%
    $\begin{smallmatrix} \mathrm{out} \\ \mathrm{in} \end{smallmatrix}$ & $\begin{smallmatrix} \mathrm{in} \\ \mathrm{in} \end{smallmatrix}$ & $\begin{smallmatrix} \mathrm{lp} \\ \mathrm{in} \end{smallmatrix}$ &%
    $\begin{smallmatrix} \mathrm{out} \\ \mathrm{in} \end{smallmatrix}$ & $\begin{smallmatrix} \mathrm{in} \\ \mathrm{in} \end{smallmatrix}$ & $\begin{smallmatrix} \mathrm{lp} \\ \mathrm{in} \end{smallmatrix}$ &%
    & & & & & &%
    \\
    \cline{2-3}\cline{5-19}
    %
    % 3rd row
    & $\tau_j$ & $\tau_j$ & &%
    $\begin{smallmatrix} \mathrm{out} \\ \mathrm{lp} \end{smallmatrix}$ & $\begin{smallmatrix} \mathrm{in} \\ \mathrm{lp} \end{smallmatrix}$ & $\begin{smallmatrix} \mathrm{lp} \\ \mathrm{lp} \end{smallmatrix}$ &%
    $\begin{smallmatrix} \mathrm{out} \\ \mathrm{lp} \end{smallmatrix}$ & $\begin{smallmatrix} \mathrm{in} \\ \mathrm{lp} \end{smallmatrix}$ & $\begin{smallmatrix} \mathrm{lp} \\ \mathrm{lp} \end{smallmatrix}$ &%
    $\begin{smallmatrix} \mathrm{out} \\ \mathrm{lp} \end{smallmatrix}$ & $\begin{smallmatrix} \mathrm{in} \\ \mathrm{lp} \end{smallmatrix}$ & $\begin{smallmatrix} \mathrm{lp} \\ \mathrm{lp} \end{smallmatrix}$ &%
    & & & & & &%
    \\
    \cline{1-19}
    %
    % 4th row
    \multirow{3}{*}{$\mathrm{out}$} &
    $\iota_j$ & & &%
    $\begin{smallmatrix} \mathrm{out} \\ \mathrm{out} \end{smallmatrix}$ & $\begin{smallmatrix} \mathrm{in} \\ \mathrm{out} \end{smallmatrix}$ & $\begin{smallmatrix} \mathrm{lp} \\ \mathrm{out} \end{smallmatrix}$ &%
    $\begin{smallmatrix} \mathrm{out} \\ \mathrm{out} \end{smallmatrix}$ & $\begin{smallmatrix} \mathrm{in} \\ \mathrm{out} \end{smallmatrix}$ & $\begin{smallmatrix} \mathrm{lp} \\ \mathrm{out} \end{smallmatrix}$ &%
    $\begin{smallmatrix} \mathrm{out} \\ \mathrm{out} \end{smallmatrix}$ & $\begin{smallmatrix} \mathrm{in} \\ \mathrm{out} \end{smallmatrix}$ & $\begin{smallmatrix} \mathrm{lp} \\ \mathrm{out} \end{smallmatrix}$ &%
    & & & & & &%
    \\
    \cline{2-3}\cline{5-19}
    %
    % 5th row
    & & $\iota_j$ & &%
    $\begin{smallmatrix} \mathrm{out} \\ \mathrm{in} \end{smallmatrix}$ & $\begin{smallmatrix} \mathrm{in} \\ \mathrm{in} \end{smallmatrix}$ & $\begin{smallmatrix} \mathrm{lp} \\ \mathrm{in} \end{smallmatrix}$ &%
    $\begin{smallmatrix} \mathrm{out} \\ \mathrm{in} \end{smallmatrix}$ & $\begin{smallmatrix} \mathrm{in} \\ \mathrm{in} \end{smallmatrix}$ & $\begin{smallmatrix} \mathrm{lp} \\ \mathrm{in} \end{smallmatrix}$ &%
    $\begin{smallmatrix} \mathrm{out} \\ \mathrm{in} \end{smallmatrix}$ & $\begin{smallmatrix} \mathrm{in} \\ \mathrm{in} \end{smallmatrix}$ & $\begin{smallmatrix} \mathrm{lp} \\ \mathrm{in} \end{smallmatrix}$ &%
    & & & & & &%
    \\
    \cline{2-3}\cline{5-19}
    %
    % 6th row
    & $\iota_j$ & $\iota_j$ & &%
    $\begin{smallmatrix} \mathrm{out} \\ \mathrm{lp} \end{smallmatrix}$ & $\begin{smallmatrix} \mathrm{in} \\ \mathrm{lp} \end{smallmatrix}$ & $\begin{smallmatrix} \mathrm{lp} \\ \mathrm{lp} \end{smallmatrix}$ &%
    $\begin{smallmatrix} \mathrm{out} \\ \mathrm{lp} \end{smallmatrix}$ & $\begin{smallmatrix} \mathrm{in} \\ \mathrm{lp} \end{smallmatrix}$ & $\begin{smallmatrix} \mathrm{lp} \\ \mathrm{lp} \end{smallmatrix}$ &%
    $\begin{smallmatrix} \mathrm{out} \\ \mathrm{lp} \end{smallmatrix}$ & $\begin{smallmatrix} \mathrm{in} \\ \mathrm{lp} \end{smallmatrix}$ & $\begin{smallmatrix} \mathrm{lp} \\ \mathrm{lp} \end{smallmatrix}$ &%
    & & & & & &%
    \\
    \cline{1-19}
    %
    % 7th row
    \multirow{3}{*}{$\mathrm{lp}$} &
    $\lambda_j$ & & &%
    $\begin{smallmatrix} \mathrm{out} \\ \mathrm{out} \end{smallmatrix}$ & $\begin{smallmatrix} \mathrm{in} \\ \mathrm{out} \end{smallmatrix}$ & $\begin{smallmatrix} \mathrm{lp} \\ \mathrm{out} \end{smallmatrix}$ &%
    $\begin{smallmatrix} \mathrm{out} \\ \mathrm{out} \end{smallmatrix}$ & $\begin{smallmatrix} \mathrm{in} \\ \mathrm{out} \end{smallmatrix}$ & $\begin{smallmatrix} \mathrm{lp} \\ \mathrm{out} \end{smallmatrix}$ &%
    $\begin{smallmatrix} \mathrm{out} \\ \mathrm{out} \end{smallmatrix}$ & $\begin{smallmatrix} \mathrm{in} \\ \mathrm{out} \end{smallmatrix}$ & $\begin{smallmatrix} \mathrm{lp} \\ \mathrm{out} \end{smallmatrix}$ &%
    & & & & & &%
    \\
    \cline{2-3}\cline{5-19}
    %
    % 8th row
    & & $\lambda_j$ & &%
    $\begin{smallmatrix} \mathrm{out} \\ \mathrm{in} \end{smallmatrix}$ & $\begin{smallmatrix} \mathrm{in} \\ \mathrm{in} \end{smallmatrix}$ & $\begin{smallmatrix} \mathrm{lp} \\ \mathrm{in} \end{smallmatrix}$ &%
    $\begin{smallmatrix} \mathrm{out} \\ \mathrm{in} \end{smallmatrix}$ & $\begin{smallmatrix} \mathrm{in} \\ \mathrm{in} \end{smallmatrix}$ & $\begin{smallmatrix} \mathrm{lp} \\ \mathrm{in} \end{smallmatrix}$ &%
    $\begin{smallmatrix} \mathrm{out} \\ \mathrm{in} \end{smallmatrix}$ & $\begin{smallmatrix} \mathrm{in} \\ \mathrm{in} \end{smallmatrix}$ & $\begin{smallmatrix} \mathrm{lp} \\ \mathrm{in} \end{smallmatrix}$ &%
    & & & & & &%
    \\
    \cline{2-3}\cline{5-19}
    %
    % 9th row
    & $\lambda_j$ & $\lambda_j$ & &%
    $\begin{smallmatrix} \mathrm{out} \\ \mathrm{lp} \end{smallmatrix}$ & $\begin{smallmatrix} \mathrm{in} \\ \mathrm{lp} \end{smallmatrix}$ & $\begin{smallmatrix} \mathrm{lp} \\ \mathrm{lp} \end{smallmatrix}$ &%
    $\begin{smallmatrix} \mathrm{out} \\ \mathrm{lp} \end{smallmatrix}$ & $\begin{smallmatrix} \mathrm{in} \\ \mathrm{lp} \end{smallmatrix}$ & $\begin{smallmatrix} \mathrm{lp} \\ \mathrm{lp} \end{smallmatrix}$ &%
    $\begin{smallmatrix} \mathrm{out} \\ \mathrm{lp} \end{smallmatrix}$ & $\begin{smallmatrix} \mathrm{in} \\ \mathrm{lp} \end{smallmatrix}$ & $\begin{smallmatrix} \mathrm{lp} \\ \mathrm{lp} \end{smallmatrix}$ &%
    & & & & & &%
    \\
    \cline{1-19}
    %
    % 10th row
    \multirow{6}{*}{$\mathrm{db}^+$} &
    $\tau_j$ & & &%
    & & &%
    & & &%
    & & &%
    $\begin{smallmatrix} \mathrm{out} \\ \mathrm{out} \end{smallmatrix}$ & $\begin{smallmatrix} \mathrm{in} \\ \mathrm{out} \end{smallmatrix}$ & $\begin{smallmatrix} \mathrm{lp} \\ \mathrm{out} \end{smallmatrix}$ & & & &%
    \\
    \cline{2-3}\cline{5-19}
    %
    % 11th row
    & & $\tau_j$ & &%
    & & &%
    & & &%
    & & &%
    $\begin{smallmatrix} \mathrm{out} \\ \mathrm{in} \end{smallmatrix}$ & $\begin{smallmatrix} \mathrm{in} \\ \mathrm{in} \end{smallmatrix}$ & $\begin{smallmatrix} \mathrm{lp} \\ \mathrm{in} \end{smallmatrix}$ & & & &%
    \\
    \cline{2-3}\cline{5-19}
    %
    % 12th row
    & $\tau_j$ & $\tau_j$ & &%
    & & &%
    & & &%
    & & &%
    $\begin{smallmatrix} \mathrm{out} \\ \mathrm{lp} \end{smallmatrix}$ & $\begin{smallmatrix} \mathrm{in} \\ \mathrm{lp} \end{smallmatrix}$ & $\begin{smallmatrix} \mathrm{lp} \\ \mathrm{lp} \end{smallmatrix}$ & & & &%
    \\
    \cline{2-3}\cline{5-19}
    %
    % 13th row
    & $\iota_j$ & & &%
    & & &%
    & & &%
    & & &%
    & & & $\begin{smallmatrix} \mathrm{out} \\ \mathrm{out} \end{smallmatrix}$ & $\begin{smallmatrix} \mathrm{in} \\ \mathrm{out} \end{smallmatrix}$ & $\begin{smallmatrix} \mathrm{lp} \\ \mathrm{out} \end{smallmatrix}$ &%
    \\
    \cline{2-3}\cline{5-19}
    %
    % 14th row
    & & $\iota_j$ & &%
    & & &%
    & & &%
    & & &%
    & & & $\begin{smallmatrix} \mathrm{out} \\ \mathrm{in} \end{smallmatrix}$ & $\begin{smallmatrix} \mathrm{in} \\ \mathrm{in} \end{smallmatrix}$ & $\begin{smallmatrix} \mathrm{lp} \\ \mathrm{in} \end{smallmatrix}$ &%
    \\
    \cline{2-3}\cline{5-19}
    %
    % 15th row
    & $\iota_j$ & $\iota_j$ & &%
    & & &%
    & & &%
    & & &%
    & & & $\begin{smallmatrix} \mathrm{out} \\ \mathrm{lp} \end{smallmatrix}$ & $\begin{smallmatrix} \mathrm{in} \\ \mathrm{lp} \end{smallmatrix}$ & $\begin{smallmatrix} \mathrm{lp} \\ \mathrm{lp} \end{smallmatrix}$ &%
    \\
    \cline{1-19}
    %
    % 16th row
    \multirow{6}{*}{$\mathrm{db}^-$} &
    $\tau_j$ & & &%
    & & &%
    & & &%
    & & &%
    & & & $\begin{smallmatrix} \mathrm{out} \\ \mathrm{out} \end{smallmatrix}$ & $\begin{smallmatrix} \mathrm{in} \\ \mathrm{out} \end{smallmatrix}$ & $\begin{smallmatrix} \mathrm{lp} \\ \mathrm{out} \end{smallmatrix}$ &%
    \\
    \cline{2-3}\cline{5-19}
    %
    % 17th row
    & & $\tau_j$ & &%
    & & &%
    & & &%
    & & &%
    & & & $\begin{smallmatrix} \mathrm{out} \\ \mathrm{in} \end{smallmatrix}$ & $\begin{smallmatrix} \mathrm{in} \\ \mathrm{in} \end{smallmatrix}$ & $\begin{smallmatrix} \mathrm{lp} \\ \mathrm{in} \end{smallmatrix}$ &%
    \\
    \cline{2-3}\cline{5-19}
    %
    % 18th row
    & $\tau_j$ & $\tau_j$ & &%
    & & &%
    & & &%
    & & &%
    & & & $\begin{smallmatrix} \mathrm{out} \\ \mathrm{lp} \end{smallmatrix}$ & $\begin{smallmatrix} \mathrm{in} \\ \mathrm{lp} \end{smallmatrix}$ & $\begin{smallmatrix} \mathrm{lp} \\ \mathrm{lp} \end{smallmatrix}$ &%
    \\
    \cline{2-3}\cline{5-19}
    %
    % 19th row
    & $\iota_j$ & & &%
    & & &%
    & & &%
    & & &%
    $\begin{smallmatrix} \mathrm{out} \\ \mathrm{out} \end{smallmatrix}$ & $\begin{smallmatrix} \mathrm{in} \\ \mathrm{out} \end{smallmatrix}$ & $\begin{smallmatrix} \mathrm{lp} \\ \mathrm{out} \end{smallmatrix}$ & & & &%
    \\
    \cline{2-3}\cline{5-19}
    %
    % 20th row
    & & $\iota_j$ & &%
    & & &%
    & & &%
    & & &%
    $\begin{smallmatrix} \mathrm{out} \\ \mathrm{in} \end{smallmatrix}$ & $\begin{smallmatrix} \mathrm{in} \\ \mathrm{in} \end{smallmatrix}$ & $\begin{smallmatrix} \mathrm{lp} \\ \mathrm{in} \end{smallmatrix}$ & & & &%
    \\
    \cline{2-3}\cline{5-19}
    %
    % 21st row
    & $\iota_j$ & $\iota_j$ & &%
    & & &%
    & & &%
    & & &%
    $\begin{smallmatrix} \mathrm{out} \\ \mathrm{lp} \end{smallmatrix}$ & $\begin{smallmatrix} \mathrm{in} \\ \mathrm{lp} \end{smallmatrix}$ & $\begin{smallmatrix} \mathrm{lp} \\ \mathrm{lp} \end{smallmatrix}$ & & & &%
    \\
    \cline{1-20}
\end{tabular}
%\end{tabularx}
\caption{Type 2 transitions. Each entry $\begin{smallmatrix} \alpha \\ \beta \end{smallmatrix}$ represents a transition to the state $q_1 \big( \begin{smallmatrix} \mathrm{col}(a) & \alpha \\ \mathrm{col}(b) & \beta \end{smallmatrix} \big)$. An empty entry at $\big( \begin{smallmatrix} \gamma \\ \delta \end{smallmatrix} \big)$ means that the state $q_1 \big( \begin{smallmatrix} i & \gamma \\ j & \delta \end{smallmatrix} \big)$ does not exist.}
\label{tab:transition:type:2}
%\end{table}
\end{sidewaystable}

For example, \cref{fig:F:gluing:automaton} depicts the gluing automaton $\mathrm{Gl}_\mathcal{I}$ for the interval replacement system from \cref{fig:replacement:interval} (which is a classical example) and \cref{fig:Dendrite:gluing:automaton} depicts the gluing automaton $\mathrm{Gl}_{\mathcal{D}_n}$ for the Dendrite replacement systems from \cref{fig:replacement:dendrite}.
Finally, \cref{tab:Basilica:gluing:automaton} represents the gluing automaton of the Basilica replacement system:
%Each row of the table determines a transition from the state in the first column, labeled by the symbol in the second column, ending in the state in the third column.
the automaton $\mathrm{Gl}_\mathcal{B}$ features a transition $q^{(1)} \xrightarrow{(a,b)} q^{(2)}$ if and only if \cref{tab:Basilica:gluing:automaton} features a row that reads $q^{(1)} \mid (a,b) \mid q^{(2)}$.

For example, to deduce the transition $q_1 \big(\begin{smallmatrix} 1 & \mathrm{in} \\ 1 & \mathrm{out} \end{smallmatrix}\big) \xrightarrow{(1,0)} q_1 \big(\begin{smallmatrix} 1 & \mathrm{in} \\ 1 & \mathrm{out} \end{smallmatrix}\big)$ in \cref{fig:F:gluing:automaton} from \cref{tab:transition:type:2} one needs to observe that $\gamma=\mathrm{in}$ and $\tau(1)$ is the terminal vertex $\tau_1$ of the replacement graph, so one needs to consider the second column, and that $\delta=\mathrm{out}$ and $\iota(0)$ is the initial vertex $\iota_1$ of the replacement graph, so one needs to consider the fourth row.
This entry of the table reads $\begin{smallmatrix} \mathrm{in} \\ \mathrm{out} \end{smallmatrix}$.

\begin{figure}
\centering
\begin{tikzpicture}
    \node[state] (start) at (-2.5,0) {$q_0(0)$};
    \node[state] (i) at (0,0) {$q_0(1)$};
    \node[state] (in-out) at (3,1) {$q_1 \big(\begin{smallmatrix} 1 & \mathrm{in} \\ 1 & \mathrm{out} \end{smallmatrix}\big)$};
    \node[state] (out-in) at (3,-1) {$q_1 \big(\begin{smallmatrix} 1 & \mathrm{out} \\ 1 & \mathrm{in} \end{smallmatrix}\big)$};
    \draw[edge] (-3.5,0) to (start);
    \draw[edge] (start) to node[above]{$(S,S)$} (i);
    \draw[edge] (i) to[loop above, min distance=1cm] node[above]{$(0,0)$} (i);
    \draw[edge] (i) to[loop below, min distance=1cm] node[below]{$(1,1)$} (i);
    \draw[edge] (i) to node[above]{$(0,1)$} (in-out);
    \draw[edge] (i) to node[below]{$(1,0)$} (out-in);
    \draw[edge] (in-out) to[loop right, min distance=1cm] node[right]{$(1,0)$} (in-out);
    \draw[edge] (out-in) to[loop right, min distance=1cm] node[right]{$(0,1)$} (out-in);
\end{tikzpicture}
\caption{The gluing automaton $\mathrm{Gl}_\mathcal{I}$ for the interval replacement system $\mathcal{I}$.}
\label{fig:F:gluing:automaton}
\end{figure}
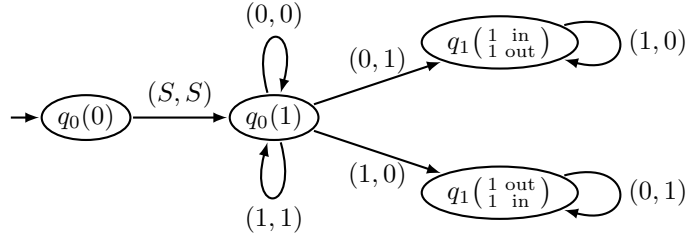

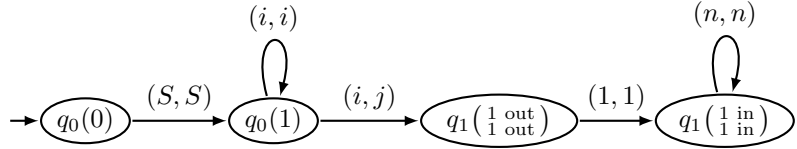
\begin{figure}
\centering
\begin{tikzpicture}
    \node[state] (start) at (-2.5,0) {$q_0(0)$};
    \node[state] (i) at (0,0) {$q_0(1)$};
    \node[state] (out-out) at (3,0) {$q_1 \big(\begin{smallmatrix} 1 & \mathrm{out} \\ 1 & \mathrm{out} \end{smallmatrix}\big)$};
    \node[state] (in-in) at (6,0) {$q_1 \big(\begin{smallmatrix} 1 & \mathrm{in} \\ 1 & \mathrm{in} \end{smallmatrix}\big)$};
    \draw[edge] (-3.5,0) to (start);
    \draw[edge] (start) to node[above]{$(S,S)$} (i);
    \draw[edge] (i) to[loop above, min distance=1cm] node[above]{$(i,i)$} (i);
    \draw[edge] (i) to node[above]{$(i,j)$} (out-out);
    \draw[edge] (out-out) to node[above]{$(1,1)$} (in-in);
    \draw[edge] (in-in) to[loop above, min distance=1cm] node[above]{$(n,n)$} (in-in);
\end{tikzpicture}
\caption{The gluing automaton $\mathrm{Gl}_{\mathcal{D}_n}$ for the Dendrite replacement systems. The label $(i,j)$ indicates that there is a transition for all possible pairs $(i,j)$, where $i$ and $j$ range from $1$ to $n$.}
\label{fig:Dendrite:gluing:automaton}
\end{figure}

\begin{table}
\centering
\renewcommand{\arraystretch}{1.333}
\small
\begin{subtable}[T]{0.45\textwidth}
\centering
{\normalsize Types 0 and 1}\\
\vspace{.25cm}
\begin{tabular}{r|c|l}
    \multirow{4}{*}{$q_0(0)$} & $(L,L)$ & $q_0(\textcolor{red}{2})$ \\
    & $(R,R)$ & $q_0(\textcolor{red}{2})$ \\
    & $(L,R)$ & $q_1 \big(\begin{smallmatrix} \textcolor{red}{2} & \mathrm{lp} \\ \textcolor{red}{2} & \mathrm{lp} \end{smallmatrix}\big)$ \\
    & $(R,L)$ & $q_1 \big(\begin{smallmatrix} \textcolor{red}{2} & \mathrm{lp} \\ \textcolor{red}{2} & \mathrm{lp} \end{smallmatrix}\big)$ \\
    \cline{1-3}
    \multirow{9}{*}{$q_0(\textcolor{blue}{1})$} & $(1,1)$ & $q_0(\textcolor{blue}{1})$ \\
    & $(1,2)$ & $q_1 \big(\begin{smallmatrix} \textcolor{blue}{1} & \mathrm{in} \\ \textcolor{red}{2} & \mathrm{lp} \end{smallmatrix}\big)$ \\
    & $(1,3)$ & $q_1 \big(\begin{smallmatrix} \textcolor{blue}{1} & \mathrm{in} \\ \textcolor{blue}{1} & \mathrm{out} \end{smallmatrix}\big)$ \\
    & $(2,1)$ & $q_1 \big(\begin{smallmatrix} \textcolor{red}{2} & \mathrm{lp} \\ \textcolor{blue}{1} & \mathrm{in} \end{smallmatrix}\big)$ \\
    & $(2,2)$ & $q_0(\textcolor{red}{2})$ \\
    & $(2,3)$ & $q_1 \big(\begin{smallmatrix} \textcolor{red}{2} & \mathrm{lp} \\ \textcolor{blue}{1} & \mathrm{out} \end{smallmatrix}\big)$ \\
    & $(3,1)$ & $q_1 \big(\begin{smallmatrix} \textcolor{blue}{1} & \mathrm{out} \\ \textcolor{blue}{1} & \mathrm{in} \end{smallmatrix}\big)$ \\
    & $(3,2)$ & $q_1 \big(\begin{smallmatrix} \textcolor{blue}{1} & \mathrm{out} \\ \textcolor{red}{2} & \mathrm{lp} \end{smallmatrix}\big)$ \\
    & $(3,3)$ & $q_0(\textcolor{blue}{1})$ \\
    \cline{1-3}
    \multirow{9}{*}{$q_0(\textcolor{red}{2})$} & $(4,4)$ & $q_0(\textcolor{blue}{1})$ \\
    & $(4,5)$ & $q_1 \big(\begin{smallmatrix} \textcolor{blue}{1} & \mathrm{in} \\ \textcolor{red}{2} & \mathrm{lp} \end{smallmatrix}\big)$ \\
    & $(4,6)$ & $q_1 \big(\begin{smallmatrix} \textcolor{blue}{1} & \mathrm{db}^+ \\ \textcolor{blue}{1} & \mathrm{db}^- \end{smallmatrix}\big)$ \\
    & $(5,4)$ & $q_1 \big(\begin{smallmatrix} \textcolor{red}{2} & \mathrm{lp} \\ \textcolor{blue}{1} & \mathrm{in} \end{smallmatrix}\big)$ \\
    & $(5,5)$ & $q_0(\textcolor{red}{2})$ \\
    & $(5,6)$ & $q_1 \big(\begin{smallmatrix} \textcolor{red}{2} & \mathrm{lp} \\ \textcolor{blue}{1} & \mathrm{out} \end{smallmatrix}\big)$ \\
    & $(6,4)$ & $q_1 \big(\begin{smallmatrix} \textcolor{blue}{1} & \mathrm{db}^+ \\ \textcolor{blue}{1} & \mathrm{db}^- \end{smallmatrix}\big)$ \\
    & $(6,5)$ & $q_1 \big(\begin{smallmatrix} \textcolor{blue}{1} & \mathrm{out} \\ \textcolor{red}{2} & \mathrm{lp} \end{smallmatrix}\big)$ \\
    & $(6,6)$ & $q_0(\textcolor{blue}{1})$ \\
\end{tabular}
\end{subtable}
\begin{subtable}[T]{0.45\textwidth}
\centering
{\normalsize Type 2}\\
\vspace{.25cm}
\begin{tabular}{r|c|l}
    \multirow{4}{*}{$q_1 \big(\begin{smallmatrix} \textcolor{red}{2} & \mathrm{lp} \\ \textcolor{red}{2} & \mathrm{lp} \end{smallmatrix}\big)$} & $(4,4)$ & $q_1 \big(\begin{smallmatrix} \textcolor{blue}{1} & \mathrm{out} \\ \textcolor{blue}{1} & \mathrm{out} \end{smallmatrix}\big)$ \\
    & $(4,6)$ & $q_1 \big(\begin{smallmatrix} \textcolor{blue}{1} & \mathrm{out} \\ \textcolor{blue}{1} & \mathrm{in} \end{smallmatrix}\big)$ \\
    & $(6,4)$ & $q_1 \big(\begin{smallmatrix} \textcolor{blue}{1} & \mathrm{in} \\ \textcolor{blue}{1} & \mathrm{out} \end{smallmatrix}\big)$ \\
    & $(6,6)$ & $q_1 \big(\begin{smallmatrix} \textcolor{blue}{1} & \mathrm{in} \\ \textcolor{blue}{1} & \mathrm{in} \end{smallmatrix}\big)$ \\
    \cline{1-3}
    \multirow{2}{*}{$q_1 \big(\begin{smallmatrix} \textcolor{blue}{1} & \mathrm{in} \\ \textcolor{red}{2} & \mathrm{lp} \end{smallmatrix}\big)$} & $(3,4)$ & $q_1 \big(\begin{smallmatrix} \textcolor{blue}{1} & \mathrm{in} \\ \textcolor{blue}{1} & \mathrm{out} \end{smallmatrix}\big)$ \\
    & $(3,6)$ & $q_1 \big(\begin{smallmatrix} \textcolor{blue}{1} & \mathrm{in} \\ \textcolor{blue}{1} & \mathrm{in} \end{smallmatrix}\big)$ \\
    \cline{1-3}
    \multirow{1}{*}{$q_1 \big(\begin{smallmatrix} \textcolor{blue}{1} & \mathrm{in} \\ \textcolor{blue}{1} & \mathrm{out} \end{smallmatrix}\big)$} & $(3,1)$ & $q_1 \big(\begin{smallmatrix} \textcolor{blue}{1} & \mathrm{in} \\ \textcolor{blue}{1} & \mathrm{out} \end{smallmatrix}\big)$ \\
    \cline{1-3}
    \multirow{2}{*}{$q_1 \big(\begin{smallmatrix} \textcolor{red}{2} & \mathrm{lp} \\ \textcolor{blue}{1} & \mathrm{in} \end{smallmatrix}\big)$} & $(4,3)$ & $q_1 \big(\begin{smallmatrix} \textcolor{blue}{1} & \mathrm{out} \\ \textcolor{blue}{1} & \mathrm{in} \end{smallmatrix}\big)$ \\
    & $(6,3)$ & $q_1 \big(\begin{smallmatrix} \textcolor{blue}{1} & \mathrm{in} \\ \textcolor{blue}{1} & \mathrm{in} \end{smallmatrix}\big)$ \\
    \cline{1-3}
    \multirow{2}{*}{$q_1 \big(\begin{smallmatrix} \textcolor{red}{2} & \mathrm{lp} \\ \textcolor{blue}{1} & \mathrm{out} \end{smallmatrix}\big)$} & $(4,1)$ & $q_1 \big(\begin{smallmatrix} \textcolor{blue}{1} & \mathrm{out} \\ \textcolor{blue}{1} & \mathrm{out} \end{smallmatrix}\big)$ \\
    & $(6,1)$ & $q_1 \big(\begin{smallmatrix} \textcolor{blue}{1} & \mathrm{in} \\ \textcolor{blue}{1} & \mathrm{out} \end{smallmatrix}\big)$ \\
    \cline{1-3}
    \multirow{1}{*}{$q_1 \big(\begin{smallmatrix} \textcolor{blue}{1} & \mathrm{out} \\ \textcolor{blue}{1} & \mathrm{in} \end{smallmatrix}\big)$} & $(1,3)$ & $q_1 \big(\begin{smallmatrix} \textcolor{blue}{1} & \mathrm{out} \\ \textcolor{blue}{1} & \mathrm{in} \end{smallmatrix}\big)$ \\
    \cline{1-3}
    \multirow{2}{*}{$q_1 \big(\begin{smallmatrix} \textcolor{blue}{1} & \mathrm{out} \\ \textcolor{red}{2} & \mathrm{lp} \end{smallmatrix}\big)$} & $(1,4)$ & $q_1 \big(\begin{smallmatrix} \textcolor{blue}{1} & \mathrm{out} \\ \textcolor{blue}{1} & \mathrm{out} \end{smallmatrix}\big)$ \\
    & $(1,6)$ & $q_1 \big(\begin{smallmatrix} \textcolor{blue}{1} & \mathrm{out} \\ \textcolor{blue}{1} & \mathrm{in} \end{smallmatrix}\big)$ \\
    \cline{1-3}
    \multirow{1}{*}{$q_1 \big(\begin{smallmatrix} \textcolor{blue}{1} & \mathrm{out} \\ \textcolor{blue}{1} & \mathrm{out} \end{smallmatrix}\big)$} & $(1,1)$ & $q_1 \big(\begin{smallmatrix} \textcolor{blue}{1} & \mathrm{out} \\ \textcolor{blue}{1} & \mathrm{out} \end{smallmatrix}\big)$ \\
    \cline{1-3}
    \multirow{1}{*}{$q_1 \big(\begin{smallmatrix} \textcolor{blue}{1} & \mathrm{in} \\ \textcolor{blue}{1} & \mathrm{in} \end{smallmatrix}\big)$} & $(3,3)$ & $q_1 \big(\begin{smallmatrix} \textcolor{blue}{1} & \mathrm{in} \\ \textcolor{blue}{1} & \mathrm{in} \end{smallmatrix}\big)$ \\
    \cline{1-3}
    \multirow{2}{*}{$q_1 \big(\begin{smallmatrix} \textcolor{blue}{1} & \mathrm{db}^+ \\ \textcolor{blue}{1} & \mathrm{db}^- \end{smallmatrix}\big)$} & $(1,3)$ & $q_1 \big(\begin{smallmatrix} \textcolor{blue}{1} & \mathrm{out} \\ \textcolor{blue}{1} & \mathrm{in} \end{smallmatrix}\big)$ \\
    & $(3,1)$ & $q_1 \big(\begin{smallmatrix} \textcolor{blue}{1} & \mathrm{in} \\ \textcolor{blue}{1} & \mathrm{out} \end{smallmatrix}\big)$ \\
\end{tabular}
\end{subtable}
\caption{The gluing automaton $\mathrm{Gl}_\mathcal{B}$ for the Basilica.}
\label{tab:Basilica:gluing:automaton}
\end{table}

\section{Rationality of the Gluing Relation}
\label{sec:gl:aut}

The definition of the gluing automaton $\mathrm{Gl}_\mathcal{R}$ might look complicated, but each of its transition has a simple meaning.
%The aim of this Subsection is to describe how the gluing automaton works with a series of Lemmas that will be useful when proving the rationality of the gluing relation in \cref{thm:rat:gl}.
This Subsection features a series of Lemmas that describe how the gluing automaton works and will be useful when proving the rationality of the gluing relation.

\begin{lemma}[Type 0 transitions generate the color graph]
\label{lem:0:trans}
The full sub-automaton $\mathrm{T}_\mathcal{R} = (\Sigma^2, Q_0, \rightarrow, q_0)$ induced by the states $Q_0$ is naturally isomorphic to the color graph.
More precisely, a finite word $(x_0, y_0) \dots (x_m, y_m)$ in the alphabet $\Sigma^2$ is recognized by $\mathrm{T}_\mathcal{R}$ if and only if $x_i = y_i$ and the word $x_0 \dots x_m$ belongs to $E_\mathcal{R}$.
\end{lemma}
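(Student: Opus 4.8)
The plan is to unwind the definitions and set up a bijection between runs of $\mathrm{T}_\mathcal{R}$ on ``diagonal'' words $(x_0,x_0)\dots(x_m,x_m)$ and walks on the color graph $A_\mathcal{R}$ starting at $q(0)$. First I would observe that the states $Q_0 = \{q_0(i) \mid i \in \{0\}\cup\mathrm{Col}\}$ are in obvious bijection with the vertices $\{q(i)\} = \{q(0)\} \cup \{q(\mathrm{c})\}$ of the color graph, via $q_0(i) \leftrightarrow q(i)$, and that the initial state $q_0(0)$ corresponds to $q(0)$. The crux is to match transitions. By definition, the only transitions of $\mathrm{Gl}_\mathcal{R}$ whose source and target both lie in $Q_0$ are the Type $0$ transitions $q_0(i) \xrightarrow{(e,e)} q_0(\mathrm{col}(e))$ for $e \in \Gamma_i^1$ (Type $1$ and Type $2$ transitions always land in $Q_1$, and the full sub-automaton induced by $Q_0$ discards those). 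This is exactly the data defining the edges of the color graph (\cref{def:color:automaton}): an edge $q(i) \to q(\mathrm{col}(e))$ for each edge $e$ of $\Gamma_i$.

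Next I would prove both directions of the biconditional simultaneously by induction on $m$, exploiting the determinism of the isomorphism on letters. For the forward direction, suppose $(x_0,y_0)\dots(x_m,y_m)$ is recognized by $\mathrm{T}_\mathcal{R}$, so there is a run $q_0(0) \xrightarrow{(x_0,y_0)} q_0(i_1) \xrightarrow{(x_1,y_1)} \dots \xrightarrow{(x_m,y_m)} q_0(i_{m+1})$ staying inside $Q_0$. Since every transition used is necessarily of Type $0$ (these are the only ones internal to $Q_0$), each letter must satisfy $x_\ell = y_\ell$, each $x_\ell$ is an edge of $\Gamma_{i_\ell}$, and $i_{\ell+1} = \mathrm{col}(x_\ell)$. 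Under the identification $q_0(i) \leftrightarrow q(i)$, this run translates verbatim into a walk $q(0) \to q(\mathrm{col}(x_0)) \to \dots$ on $A_\mathcal{R}$ starting at $q(0)$, which by the remark following \cref{def:color:automaton} means precisely that $x_0\dots x_m \in E_\mathcal{R}$. Conversely, given $x_0\dots x_m \in E_\mathcal{R}$, it corresponds to a directed walk on $A_\mathcal{R}$ from $q(0)$, and reading off the colors of the vertices visited reconstructs the sequence $i_1 = \mathrm{col}(x_0), i_2 = \mathrm{col}(x_1), \dots$; setting $y_\ell = x_\ell$ and invoking the Type $0$ transition $q_0(i_\ell) \xrightarrow{(x_\ell,x_\ell)} q_0(\mathrm{col}(x_\ell))$ at each step yields an accepting run of $\mathrm{T}_\mathcal{R}$.

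The single point needing a little care is the first step, where the source is $q_0(0)$ rather than $q_0(i)$ for a color $i$: here I would note that the Type $0$ rule explicitly ranges over $i \in \{0\}\cup\mathrm{Col}$, so $q_0(0) \xrightarrow{(e,e)} q_0(\mathrm{col}(e))$ is available for $e \in \Gamma_0^1$, matching the edges of $A_\mathcal{R}$ emanating from $q(0)$; this is exactly the condition that a walk \emph{starts} at $q(0)$. I do not expect any genuine obstacle: the statement is essentially a dictionary between two presentations of the same combinatorial object, and the only thing to verify is that no other transition type can contribute to a run confined to $Q_0$, which is immediate from the three-way split in the definition of $\rightarrow$. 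The ``naturality'' of the isomorphism is witnessed by the bijection $q_0(i) \leftrightarrow q(i)$ on states together with the induced bijection on labelled edges, so I would state it as such rather than belabor a formal diagram.
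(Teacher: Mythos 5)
Your proposal is correct and takes essentially the same route as the paper: both identify that the only transitions internal to $Q_0$ are the Type~0 transitions (since Type~1 and Type~2 transitions land in $Q_1$ and nothing returns from $Q_1$ to $Q_0$), and both exhibit the isomorphism that is the identity on states and sends $q_0(i) \xrightarrow{(e,e)} q_0(\mathrm{col}(e))$ to the color-graph edge $q(i) \to q(\mathrm{col}(e))$, from which the run/walk dictionary and hence the recognition statement follow. Your explicit induction on $m$ merely spells out what the paper leaves implicit in the phrase ``is an isomorphism.''
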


\begin{proof}
Note that $\mathrm{Gl}_\mathcal{R}$ features no transition from $Q_1$ to $Q_0$ and the transitions from $Q_0$ to itself are precisely the Type 0 transitions.
The map $\mathrm{T}_\mathcal{R} \rightarrow A_\mathcal{R}$ that is the identity on $Q_0$ and sends the transition $q_0(i) \xrightarrow{(e,e)} q_0 (\mathrm{col}(e))$ to $q_0(i) \xrightarrow{e} q_0 (\mathrm{col}(e))$ is an isomorphism.
\end{proof}

Note that if two words $x_0 \dots x_m$ and $y_0 \dots y_m$ in $E_\mathcal{R}$ are the same except for $x_m \neq y_m$, then they represent adjacent edges in the $m$-th full expansion $E_m$ (\cref{def.full.expansion}) if and only if $x_m$ and $y_m$ are adjacent in $\Gamma_{\mathrm{Col}(x_{m-1})} = \Gamma_{\mathrm{Col}(y_{m-1})}$.
Thus, even if by definition Type 1 transitions depend on edge adjacencies in the base or the replacement graphs, they really determine adjacencies in $E_m$.
%This is because, using distinguished colors for loops and defining their replacement systems to have a sole boundary vertex (differently from \cite{BF19}) allows us to see the expanded subgraph as an embedded copy of the replacement graph.
This is because identifying the initial and terminal vertices of replacement graphs for colors associated to loops (as explained in \cref{sub.loops}) allows us to see the expanded subgraph as an embedded copy of the replacement graph.

\begin{lemma}[Type 1 transition implies adjacency]
\label{lem:1:trans:imply:adj}
If a sequence $(x_0, y_0) (x_1, y_1) \dots$ of $(\Sigma^2)^\omega$ is recognized by $\mathrm{Gl}_\mathcal{R}$, then one and only one of the following hold.
\begin{enumerate}
    \item The sequence is recognized by the sub-automaton $\mathrm{T}_\mathcal{R}$ and $x_i=y_i$ for all $i$.
    \item The run of $(x_0, y_0) (x_1, y_1) \dots$ in $\mathrm{Gl}_\mathcal{R}$ features exactly one Type 1 transition taking place when processing the first $(x_m, y_m)$ such that $x_m \neq y_m$.
    Moreover, $x_0 \dots x_m$ and $y_0 \dots y_m$ are adjacent edges in $E_m$ and the state of $\mathrm{Gl}_\mathcal{R}$ after processing $(x_m, y_m)$ is $q_1 \big(\begin{smallmatrix} \mathrm{col}(x_m) & \alpha \\ \mathrm{col}(y_m) & \beta \end{smallmatrix}\big)$, which determines the type of adjacency by the entries $\alpha$ and $\beta$, as describe in \cref{tab:transition:type:1}.
\end{enumerate}
\end{lemma}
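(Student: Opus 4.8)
The plan is to exploit the rigid block structure of the transition function: the only transitions out of a state in $Q_0$ are of Type $0$ (which require the two processed entries to coincide) and Type $1$ (which require them to differ and be adjacent), while $Q_1$ is a trap set, since, as already observed in the proof of \cref{lem:0:trans}, $\mathrm{Gl}_\mathcal{R}$ has no transition from $Q_1$ back to $Q_0$. Because the automaton is deterministic and the initial state $q_0(0)$ lies in $Q_0$, any accepting run begins with a (possibly empty) stretch of Type $0$ transitions and can leave $Q_0$ only through a single Type $1$ transition, after which it is confined to $Q_1$.

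First I would settle the dichotomy by induction on the index $i$: as long as $x_0=y_0,\dots,x_{i-1}=y_{i-1}$, the run sits in $Q_0$ and each of its first $i$ transitions is of Type $0$. Indeed a Type $1$ transition needs $x_i\neq y_i$, and a Type $2$ transition would require already being in $Q_1$, which is impossible without a prior Type $1$ step. If $x_i=y_i$ for every $i$, this shows the whole run stays in $Q_0$ using only Type $0$ transitions, so by \cref{lem:0:trans} the sequence is recognized by $\mathrm{T}_\mathcal{R}$; this is alternative (1). Otherwise let $m$ be the least index with $x_m\neq y_m$; the induction places the run, just before reading $(x_m,y_m)$, in a state $q_0(c)$ with $c=\mathrm{col}(x_{m-1})$ (and $c=0$ when $m=0$).

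Next I would identify the transition at position $m$. The only transitions leaving a $Q_0$ state are of Type $0$ or Type $1$, and Type $0$ requires the two entries to coincide; since $x_m\neq y_m$, the transition out of $q_0(c)$ on $(x_m,y_m)$ can only be a Type $1$ transition, which exists precisely when $x_m$ and $y_m$ are adjacent in $\Gamma_c$. Because the sequence is recognized, the run reads $(x_m,y_m)$ successfully, so this transition is defined and $x_m$ and $y_m$ are adjacent in $\Gamma_c$; by the remark following \cref{lem:0:trans} this is equivalent to $x_0\dots x_m$ and $y_0\dots y_m$ being adjacent edges of $E_m$. The Type $1$ rule then sends the run to $q_1\big(\begin{smallmatrix} \mathrm{col}(x_m) & \alpha \\ \mathrm{col}(y_m) & \beta \end{smallmatrix}\big)$ with $\alpha,\beta$ read off from \cref{tab:transition:type:1} according to the type of adjacency, which is exactly the state asserted in alternative (2). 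Finally, since the run now lies in $Q_1$ and can never return to $Q_0$, no further Type $1$ transition is possible, so position $m$ carries the unique Type $1$ transition of the run.

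To close, I would note that the two alternatives are mutually exclusive: alternative (1) requires $x_i=y_i$ for all $i$, whereas alternative (2) exhibits an index with $x_m\neq y_m$; hence exactly one holds. I expect no serious obstacle, as the argument is essentially bookkeeping on which block of transitions is available in $Q_0$ versus $Q_1$. The only point demanding care is the passage from adjacency in the single graph $\Gamma_c$ to adjacency of the length-$(m+1)$ words in the full expansion $E_m$, which is precisely the content of the remark after \cref{lem:0:trans} and relies on the loop-identification convention of \cref{sub.loops} that lets the expanded block be read as an embedded copy of $\Gamma_c$.
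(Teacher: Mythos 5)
Your proposal is correct and takes essentially the same approach as the paper's proof: both hinge on the fact that $\mathrm{Gl}_\mathcal{R}$ has no transitions from $Q_1$ back to $Q_0$ (so a run features at most one Type 1 transition), use \cref{lem:0:trans} to identify the all-equal prefix with an edge of $E_{m-1}$, and transfer the adjacency of $x_m$ and $y_m$ in $\Gamma_{\mathrm{col}(x_{m-1})}$ to adjacency of $x_0 \dots x_m$ and $y_0 \dots y_m$ in $E_m$ via the remark following \cref{lem:0:trans}. Your explicit induction on the common prefix merely spells out what the paper leaves implicit.
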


\begin{proof}
Case (1) is the one discussed in \cref{lem:0:trans}, so let us assume that there is an $m \in \mathbb{N}$ such that $x_m \neq y_m$.
Take $m$ to be the minimum such number.
Then, before the letter $(x_m, y_m)$, only Type 0 transitions have taken place.
Note that $\mathrm{Gl}_\mathcal{R}$ features no transitions from $Q_1$ to $Q_0$.
Thus, since Type 1 transitions are all oriented from $Q_0$ to $Q_1$, each run can feature at most one Type 1 transition.

By \cref{lem:0:trans}, $x_0 \dots x_{m-1} = y_0 \dots y_{m-1}$ belongs to $E_\mathcal{R}$, so it is an edge of $E_{m-1}$.
By the definition of a Type 1 transition, it follows that $x_0 \dots x_m$ and $y_0 \dots y_m$ are adjacent in $E_m$ and that the state of $\mathrm{Gl}_\mathcal{R}$ after processing $(x_m, y_m)$ is $q_1 \big(\begin{smallmatrix} \mathrm{col}(x_m) & \alpha \\ \mathrm{col}(y_m) & \beta \end{smallmatrix}\big)$, where $\alpha$ and $\beta$ describe the type adjacency as expressed in \cref{tab:transition:type:1}.
\end{proof}

\begin{lemma}[Adjacency implies Type 1 transition]
\label{lem:adj:imply:1:trans}
Given two words $x_0 \dots x_m$ and $y_0 \dots y_m$ in $E_\mathcal{R}$ that are the same except for $x_m \neq y_m$, if they represent adjacent edges of $E_m$ then the word $(x_0, y_0) (x_1, y_1) \dots (x_m, y_m)$ is recognized by $\mathrm{Gl}_\mathcal{R}$, and its run consists of $m$ Type 0 transitions followed by a Type 1 transition.
\\
Moreover, the type of adjacency between $x_m$ and $y_m$ in $\Gamma_{\mathrm{Col}(x_{m-1})} = \Gamma_{\mathrm{Col}(y_{m-1})}$ and between $x_0 \dots x_m$ and $y_0 \dots y_m$ in $E_m$ is the same, i.e., the last state of the run $q_1 \big(\begin{smallmatrix} \mathrm{col}(x_m) & \alpha \\ \mathrm{col}(y_m) & \beta \end{smallmatrix}\big)$
determines the edge adjacency of $x_0 \dots x_m$ and $y_0 \dots y_m$ in $E_m$ as described in \cref{tab:transition:type:1}.
\end{lemma}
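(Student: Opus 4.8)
The plan is to run the word $(x_0, y_0) \dots (x_m, y_m)$ through $\mathrm{Gl}_\mathcal{R}$ explicitly, splitting it at the unique position where the two entries first differ, and then to invoke the machinery already established for Type $0$ and Type $1$ transitions. This statement is essentially the converse of \cref{lem:1:trans:imply:adj}, so the two arguments proceed in parallel.

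First I would handle the common prefix. Since $x_0 \dots x_m$ and $y_0 \dots y_m$ agree on their first $m$ letters, the word $(x_0, y_0) \dots (x_{m-1}, y_{m-1})$ has equal entries in every coordinate, and $x_0 \dots x_{m-1}$, being a prefix of a word of $E_\mathcal{R}$, is itself a directed walk on $A_\mathcal{R}$ and hence lies in $E_\mathcal{R}$. By \cref{lem:0:trans}, this prefix is recognized by the sub-automaton $\mathrm{T}_\mathcal{R}$ through $m$ Type $0$ transitions, leaving $\mathrm{Gl}_\mathcal{R}$ in the state $q_0(i)$ with $i = \mathrm{col}(x_{m-1})$ (or in the initial state $q_0(0)$, corresponding to $i = 0$, when $m = 0$).

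Next I would process the final letter. Both $x_m$ and $y_m$ are edges of $\Gamma_i$, since $x_0 \dots x_m$ and $y_0 \dots y_m$ lie in $E_\mathcal{R}$ and share the prefix representing the color-$i$ edge being expanded. By the observation following \cref{lem:0:trans}, the hypothesis that $x_0 \dots x_m$ and $y_0 \dots y_m$ are adjacent in $E_m$ is equivalent to $x_m$ and $y_m$ being adjacent in $\Gamma_i$; as moreover $x_m \neq y_m$, the definition of Type $1$ transitions supplies exactly one transition $q_0(i) \xrightarrow{(x_m, y_m)} q_1 \big(\begin{smallmatrix} \mathrm{col}(x_m) & \alpha \\ \mathrm{col}(y_m) & \beta \end{smallmatrix}\big)$, where $\alpha$ and $\beta$ are read off from the adjacency type of $x_m$ and $y_m$ in $\Gamma_i$ via \cref{tab:transition:type:1}. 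This yields recognition of the full word by a run of $m$ Type $0$ transitions followed by a single Type $1$ transition, settling the first two assertions.

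The hard part is the \emph{moreover} clause: that the adjacency type recorded in the final state coincides with the genuine adjacency type of $x_0 \dots x_m$ and $y_0 \dots y_m$ inside $E_m$, and not merely inside the abstract graph $\Gamma_i$. The crux is that expanding the color-$i$ edge realizes $\Gamma_i$ as an embedded subgraph of $E_m$, with its internal vertices becoming fresh vertices of $E_m$ and its boundary vertices glued to the endpoints of the expanded edge. The delicate point is entirely at the boundary: a priori an expansion could merge $\iota_i$ with $\tau_i$ and thereby create an incidence, or change its type, that is invisible in $\Gamma_i$. This is ruled out by the loop identification of \cref{sub.loops}, since under \cref{ass.loops} the boundary vertices of a loop color are already identified within $\Gamma_i$ itself; thus no further identification occurs during the expansion and the embedding faithfully preserves both the incidences and their types in the sense of \cref{tab:transition:type:1}. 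As the observation following \cref{lem:0:trans} packages exactly this fact, the clause follows by transporting the adjacency type of $x_m$ and $y_m$ in $\Gamma_i$ across the embedding to that of $x_0 \dots x_m$ and $y_0 \dots y_m$ in $E_m$.
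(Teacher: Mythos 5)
Your proof is correct and takes essentially the same route as the paper's: recognize the common prefix with $m$ Type~0 transitions via \cref{lem:0:trans}, note that adjacency of $x_0 \dots x_m$ and $y_0 \dots y_m$ in $E_m$ is equivalent to adjacency of $x_m$ and $y_m$ in $\Gamma_{\mathrm{col}(x_{m-1})}$, and then invoke the definition of Type~1 transitions and \cref{tab:transition:type:1}. The only difference is one of detail: where the paper simply cites the observation recorded just after \cref{lem:0:trans} for the \emph{moreover} clause, you unpack its justification (the expansion realizing $\Gamma_i$ as an embedded subgraph of $E_m$, with \cref{ass.loops} and the identification of \cref{sub.loops} ruling out boundary-vertex mergers), which is a faithful elaboration rather than a different argument.
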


\begin{proof}
Given two words $x_0 \dots x_m$ and $y_0 \dots y_m$ in $E_\mathcal{R}$ that only differ in $x_m \neq y_m$ and that represent adjacent edges of $E_m$, by \cref{lem:0:trans} the sub-automaton $\mathrm{T}_\mathcal{R}$ recognizes $(x_0, y_0) (x_1, y_1) \ldots (x_{m-1}, y_{m-1})$.
By definition, a Type 1 transition occurs when processing the letter $(x_m, y_m)$, ending in a state $q_1 \big(\begin{smallmatrix} \mathrm{col}(x_m) & \alpha \\ \mathrm{col}(y_m) & \beta \end{smallmatrix}\big)$ that determines the edge adjacency between $x_m$ and $y_m$ in the replacement graph $\Gamma_{\mathrm{col}(x_{m-1})} = \Gamma_{\mathrm{col}(y_{m-1})}$, and thus in $E_m$, as expressed in \cref{tab:transition:type:1}.
%Since the edges $x_0 \dots x_m$ and $y_0 \dots y_m$ of $E_m$ belong to an embedded copy of the replacement graph $\Gamma_{\mathrm{col}(x_{m-1})} = \Gamma_{\mathrm{col}(y_{m-1})}$, the type of adjacency is the same.
\end{proof}

%Note that each Type 2 transition is preceded by a (possibly empty) sequence of Type 0 transitions followed by exactly one Type 1 transition and then a sequence of Type 2 transitions.
Note that a Type 2 transition is preceded by a (possibly empty) sequence of Type 0 transitions, one Type 1 transition and possibly a sequence of Type 2 transitions.

\begin{lemma}[Type 2 transitions imply adjacency]
\label{lem:2:trans:imply:adj}
Let $(x_0, y_0) (x_1, y_1) \ldots (x_k, y_k)$ be recognized by $\mathrm{Gl}_\mathcal{R}$.
If the transition involving $(x_k, y_k)$ is a Type 2 transition, then $x_0 \dots x_k$ and $y_0 \dots y_k$ are adjacent edges of $E_k$.
\end{lemma}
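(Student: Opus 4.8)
The plan is to prove the statement by induction along the Type 2 portion of the run, while establishing at the same time a slightly stronger fact: that the current $Q_1$ state faithfully records \emph{which} boundary vertices the two partial edges share. This strengthening is forced on us, since each Type 2 transition reads the shared-vertex data off the entries $\gamma, \delta$ of its source state in order to decide adjacency of the next pair; mere adjacency would therefore not be a strong enough inductive hypothesis.

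First I would use the note preceding the lemma to fix the shape of the run: there is a least index $m$ with $x_m \neq y_m$, the transition at $m$ is the unique Type 1 transition, and all transitions at the indices $m+1, \dots, k$ are of Type 2. I would then induct on $\ell \in \{m, \dots, k\}$, proving that $x_0 \dots x_\ell$ and $y_0 \dots y_\ell$ are adjacent edges of $E_\ell$ and that the state $q_1\big(\begin{smallmatrix} \mathrm{col}(x_\ell) & \gamma_\ell \\ \mathrm{col}(y_\ell) & \delta_\ell \end{smallmatrix}\big)$ reached after reading $(x_\ell, y_\ell)$ encodes the incidence of each of these two edges with the shared vertex exactly as in the legend of \cref{tab:transition:type:1} (so $\mathrm{in}$ means the shared vertex is the edge's terminal, $\mathrm{out}$ means it is its initial, $\mathrm{lp}$ means the edge is a loop there, and $\mathrm{db}^\pm$ means both boundary vertices are shared). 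The base case $\ell = m$ is precisely \cref{lem:1:trans:imply:adj}, case~(2).

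For the inductive step I would pass from $E_{\ell-1}$ to $E_\ell$. By hypothesis $e_1 := x_0 \dots x_{\ell-1}$ and $e_2 := y_0 \dots y_{\ell-1}$ are adjacent edges of $E_{\ell-1}$ whose shared vertices are recorded by the source state $q_1\big(\begin{smallmatrix} i & \gamma \\ j & \delta \end{smallmatrix}\big)$, where $i = \mathrm{col}(x_{\ell-1})$ and $j = \mathrm{col}(y_{\ell-1})$. Expanding replaces $e_1, e_2$ by embedded copies of $\Gamma_i, \Gamma_j$; thanks to the identification of the boundary vertices of loop colors (\cref{sub.loops}) these are faithful copies, and since $e_1 \neq e_2$ the two subgraphs meet exactly in the boundary vertices that $e_1$ and $e_2$ shared. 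The values $\gamma, \delta$ name these vertices: $\gamma = \mathrm{in}, \mathrm{out}, \mathrm{lp}$ marks $\tau_i, \iota_i, \lambda_i$ respectively, while $\gamma = \mathrm{db}^\pm$ marks both $\iota_i$ and $\tau_i$ (symmetrically for $\delta$ in $\Gamma_j$). As $x_0 \dots x_\ell$ and $y_0 \dots y_\ell$ correspond to $x_\ell \in \Gamma_i^1$ and $y_\ell \in \Gamma_j^1$ inside these two copies, they are adjacent in $E_\ell$ if and only if $x_\ell$ and $y_\ell$ are incident on one common shared boundary vertex.

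The remaining point, and the one I expect to carry the real work, is to check that \cref{tab:transition:type:2} tests exactly this incidence: the column chosen by $\gamma$ imposes that $x_\ell$ touch the vertex named by $\gamma$, the row chosen by $\delta$ imposes the same for $y_\ell$, and a non-empty entry occurs precisely when these are simultaneously met at a common vertex, whereupon the entry $\begin{smallmatrix} \alpha \\ \beta \end{smallmatrix}$ updates the state to one whose $\alpha, \beta$ again record how $x_\ell, y_\ell$ sit at the new shared vertex, closing the induction; taking $\ell = k$ then yields the lemma. The main obstacle is the bookkeeping in the double case $\gamma = \mathrm{db}^+$: there two boundary vertices are shared at once, so the table must test incidence of $x_\ell$ against \emph{either} $\iota_i$ or $\tau_i$, and one must invoke expandingness (\cref{def:exp:rep:sys}) to know that no edge of a replacement graph meets both $\iota$ and $\tau$, so that these tests are mutually exclusive, the resulting $\alpha, \beta$ are never themselves $\mathrm{db}^\pm$, and the relevant columns and rows of the table are unambiguous.
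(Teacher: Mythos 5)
Your proposal is correct and takes essentially the same approach as the paper: an induction along the Type~2 portion of the run whose strengthened invariant --- that the current $Q_1$ state faithfully records the type of adjacency of the two prefixes --- is exactly what the paper's proof carries implicitly when it iterates its first paragraph ``as soon as the last state in the run \dots describes the adjacency,'' with the base case given by \cref{lem:1:trans:imply:adj} and the inductive step being the verification against \cref{tab:transition:type:2}. Your explicit handling of the $\mathrm{db}^\pm$ cases via expandingness (no edge of a replacement graph meets both $\iota$ and $\tau$, and $\alpha,\beta$ are never $\mathrm{db}^\pm$) matches the remarks the paper records just before the tables.
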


\begin{proof}
First, suppose that the transition involving $(x_k, y_k)$ is the first Type 2 transition, so the previous transition is of Type 1.
Then, by \cref{lem:1:trans:imply:adj}, the edges $x_1 \dots x_{k-1}$ and $y_1 \dots y_{k-1}$ are adjacent and the state $q_1 \big(\begin{smallmatrix} \mathrm{col}(x_{k-1}) & \alpha \\ \mathrm{col}(y_{k-1}) & \beta \end{smallmatrix}\big)$ after processing $(x_{k-1}, y_{k-1})$ describes the type of adjacency as expressed in \cref{tab:transition:type:1}.
Thoroughly going through \cref{tab:transition:type:2} 
reveals that the column and the row of any non empty entry correspond to a pair of adjacent edges in the graph obtained by gluing the replacement graphs $\Gamma_{\mathrm{col}(x_{k-1})}$ and $\Gamma_{\mathrm{col}(y_{k-1})}$ as dictated by $\alpha$ and $\beta$, which appears as a subgraph of $E_k$ that contains the edges $x_1 \dots x_k$ and $y_1 \dots y_k$.
Moreover, the resulting state $q_1 \big(\begin{smallmatrix} \mathrm{col}(x_k) & \alpha' \\ \mathrm{col}(y_k) & \beta' \end{smallmatrix}\big)$ describes the adjacency between $x_0 \dots x_k$ and $y_0 \dots y_k$.

Now suppose that the transition involving $(x_k, y_k)$ is not the first Type 2 transition.
The argument of the previous paragraph applies as soon as the last state in the run of $(x_1,y_1) \dots (x_{k-1}, y_{k-1})$ describes the adjacency of $(x_{k-1}, y_{k-1})$.
Thus, we can iteratively apply the argument to see that $x_0 \dots x_k$ and $y_0 \dots y_k$ are adjacent and that the last state of the run determines their type of adjacency.
%Since the previous transition was of Type 2, by reasoning as in the previous paragraph we find that the resulting state determines the type of adjacency between $(x_{k-1}, y_{k-1})$.
%Then we can iteratively apply the same argument to see that $x_0 \dots x_k$ and $y_0 \dots y_k$ are adjacent edges and that the last state of the run determines their type of adjacency.
\end{proof}

\begin{lemma}[Adjacency implies a Type 2 transition]
\label{lem:adj:imply:2:trans}
Let $x_0 \dots x_{m+1}$ and $y_0 \dots y_{m+1}$ in $E_\mathcal{R}$ represent adjacent edges of $E_{m+1}$.
Suppose that $x_0 \dots x_m \neq y_0 \dots y_m$ and that $(x_0, y_0) \dots (x_m, y_m)$ is recognized by $\mathrm{Gl}_\mathcal{R}$.
Then $(x_0, y_0) \dots (x_{m+1}, y_{m+1})$ is recognized by $\mathrm{Gl}_\mathcal{R}$ and the last transition is a Type 2 transition ending in $q_1 \big(\begin{smallmatrix} \mathrm{col}(x_{m+1}) & \alpha \\ \mathrm{col}(y_{m+1}) & \beta \end{smallmatrix}\big)$ that describes the adjacency between $x_0 \dots x_{m+1}$ and $y_0 \dots y_{m+1}$ in $E_{m+1}$.
\end{lemma}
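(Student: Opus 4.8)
The plan is to run the argument of \cref{lem:2:trans:imply:adj} in reverse, exploiting the exhaustiveness of \cref{tab:transition:type:2}. First I would pin down the state reached after processing $(x_0, y_0) \dots (x_m, y_m)$. Since $x_0 \dots x_m \neq y_0 \dots y_m$, these words differ in at least one entry, so \cref{lem:1:trans:imply:adj} places us in its case (2): the run features exactly one Type 1 transition and thereafter, since there are no transitions from $Q_1$ back to $Q_0$, only Type 2 transitions. Hence after $(x_m, y_m)$ the automaton sits in a state $q_1\big(\begin{smallmatrix} \mathrm{col}(x_m) & \gamma \\ \mathrm{col}(y_m) & \delta \end{smallmatrix}\big) \in Q_1$. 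Combining \cref{lem:1:trans:imply:adj} (if $m$ is the first index of divergence) with \cref{lem:2:trans:imply:adj} (if the transition at $(x_m, y_m)$ is already of Type 2), this state exists and correctly describes the type of adjacency of the edges $x_0 \dots x_m$ and $y_0 \dots y_m$ in $E_m$. As every transition out of $Q_1$ is of Type 2, it then suffices to exhibit the appropriate Type 2 transition.

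Next I would describe the local geometry inside $E_{m+1}$. Expanding the two adjacent edges $x_0 \dots x_m$ and $y_0 \dots y_m$ replaces each with an embedded copy of $\Gamma_{\mathrm{col}(x_m)}$, respectively $\Gamma_{\mathrm{col}(y_m)}$, the two copies being glued together along exactly the boundary vertices prescribed by the adjacency type $\gamma, \delta$; identifying the boundary vertices of loop-colored graphs, as in \cref{sub.loops}, is what lets us regard each expansion as an honest embedded copy of the replacement graph. In this glued subgraph the edge $x_0 \dots x_{m+1}$ is the copy of an edge $a = x_{m+1}$ of $\Gamma_{\mathrm{col}(x_m)}$ and $y_0 \dots y_{m+1}$ is the copy of an edge $b = y_{m+1}$ of $\Gamma_{\mathrm{col}(y_m)}$.

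Then I would read off the table. Because the two copies are distinct and meet only at the glued boundary vertices, the hypothesized adjacency of $x_0 \dots x_{m+1}$ and $y_0 \dots y_{m+1}$ in $E_{m+1}$ forces both $a$ and $b$ to be incident on one such shared vertex. Recording which of $\iota, \tau$ (or $\lambda$) of each replacement graph the edges $a$ and $b$ touch selects, together with the already-determined $\gamma$ and $\delta$, a single column and a single row of \cref{tab:transition:type:2}; since the system is expanding, an edge of a replacement graph cannot meet both boundary vertices, so this selection is unambiguous. The selected entry is non-empty precisely because the shared-vertex incidence just observed is one of the configurations tabulated there, and its label $\begin{smallmatrix} \alpha \\ \beta \end{smallmatrix}$ is, by construction of the table, the type of adjacency of $a$ and $b$ in the glued subgraph, hence of $x_0 \dots x_{m+1}$ and $y_0 \dots y_{m+1}$ in $E_{m+1}$. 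This yields the Type 2 transition $q_1\big(\begin{smallmatrix} \mathrm{col}(x_m) & \gamma \\ \mathrm{col}(y_m) & \delta \end{smallmatrix}\big) \xrightarrow{(a,b)} q_1\big(\begin{smallmatrix} \mathrm{col}(x_{m+1}) & \alpha \\ \mathrm{col}(y_{m+1}) & \beta \end{smallmatrix}\big)$ and completes the recognition of $(x_0, y_0) \dots (x_{m+1}, y_{m+1})$.

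The step I expect to be the main obstacle is the completeness of this table lookup: I must be sure that \emph{every} adjacency that can actually occur between the expanded sub-edges lands in a non-empty cell carrying the correct $\alpha, \beta$. The forward implication (non-empty cell $\Rightarrow$ correct adjacency) is already the exhaustive content of \cref{lem:2:trans:imply:adj}, so the genuine work is the converse, namely that an empty cell never corresponds to a real adjacency. This holds because empty cells record either nonexistent $Q_1$-states or boundary-incidence configurations at which the two glued copies do not share a vertex at all, so no edges of those types can become adjacent in $E_{m+1}$.
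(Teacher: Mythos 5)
Your proposal is correct and takes essentially the same route as the paper's proof: pin down that the state reached after processing $(x_m, y_m)$ records the adjacency type of $x_0 \dots x_m$ and $y_0 \dots y_m$ in $E_m$ (the paper re-derives this by iterating forward from the Type 1 transition, whereas you invoke \cref{lem:1:trans:imply:adj} together with the strengthened conclusion established inside the proof of \cref{lem:2:trans:imply:adj}), and then read the required Type 2 transition off \cref{tab:transition:type:2}, which you justify in more detail than the paper via the geometry of the two glued embedded copies of the replacement graphs. One minor inaccuracy in your closing remark: in the $\mathrm{db}^\pm$ cases the two copies \emph{do} share vertices (two of them), and the empty cells there correspond to $x_{m+1}$ and $y_{m+1}$ touching \emph{different} glued vertices rather than to the copies sharing no vertex at all --- but this aside plays no role in your main argument, which is sound.
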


\begin{proof}
First, suppose that the state after having processed $(x_0, y_0), \dots (x_m, y_m)$ is $q_1 \big(\begin{smallmatrix} \mathrm{col}(x_m) & \alpha \\ \mathrm{col}(y_m) & \beta \end{smallmatrix}\big)$ and that it describes the type adjacency as expressed in \cref{tab:transition:type:1}.
Then a careful look at \cref{tab:transition:type:2} 
shows that the gluing automaton can process the next letter $(x_{m+1}, y_{m+1})$ with a Type 2 transition that ends in the state $q_1 \big(\begin{smallmatrix} \mathrm{col}(x_{m+1}) & \alpha' \\ \mathrm{col}(y_{m+1}) & \beta' \end{smallmatrix}\big)$ which describes the adjacency between $x_0 \dots x_{m+1}$ and $y_0 \dots y_{m+1}$.

Now, by \cref{lem:adj:imply:1:trans} the common prefix $x_0 \dots x_p = y_0 \dots y_p$ of $x_0 \dots x_m$ and $y_0 \dots y_m$ (where $0 \leq p \leq m-1$) is processed by the sub-automaton $\mathrm{T}_\mathcal{R}$, after which a Type 1 transition brings us to the state $q_1 \big(\begin{smallmatrix} \mathrm{col}(x_{p+1}) & \alpha \\ \mathrm{col}(y_{p+1}) & \beta \end{smallmatrix}\big)$ that describes the adjacency of $x_0 \dots x_{p+1}$ and $y_0 \dots y_{p+1}$ in $E_{p+1}$.
Reasoning as in the previous paragraph shows that this is followed by a Type 2 transition to the state $q_1 \big(\begin{smallmatrix} \mathrm{col}(x_{p+2}) & \alpha' \\ \mathrm{col}(y_{p+2}) & \beta' \end{smallmatrix}\big)$ that describes the adjacency between $x_0 \dots x_{p+2}$ and $y_0 \dots y_{p+2}$.
Since each new transition results in a state that describes edge adjacency, the previous paragraph can be iteratively applied to every subsequent prefix of $x_0 \dots x_{m+1}$ and $y_0 \dots y_{m+1}$, concluding with a Type 2 transition from $(x_0, y_0) \dots (x_m, y_m)$ to $(x_0, y_0) \dots (x_{m+1}, y_{m+1})$ that ends in a state that describes the edge adjacency between $x_0 \dots x_{m+1}$ and $y_0 \dots y_{m+1}$.
\end{proof}

Now let us put the Lemmas together.

\begin{theorem}
\label{thm:rat:gl}
The gluing relation of an expanding replacement system is rational.
\end{theorem}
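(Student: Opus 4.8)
The plan is to prove that $\mathrm{Gl}_\mathcal{R}$ recognizes a pair $(\alpha,\beta)\in(\Sigma^2)^\omega$ if and only if $\alpha,\beta\in\Omega_\mathcal{R}$ and $\alpha\sim\beta$, which is precisely the assertion that $\sim$ is rational. The five Lemmas above do the heavy lifting, so the theorem is essentially their assembly along an infinite run. First I would record that every run of $\mathrm{Gl}_\mathcal{R}$ traces two walks on the color graph: a transition on a letter $(a,b)$ is defined only when $a\in\Gamma_i^1$ and $b\in\Gamma_j^1$ for the colors $i,j$ stored in the current state, and it updates these to $\mathrm{col}(a),\mathrm{col}(b)$. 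Starting from $q_0(0)$, this forces both coordinates to be infinite walks on $A_\mathcal{R}$ from $q(0)$, so any recognized pair already lies in $\Omega_\mathcal{R}\times\Omega_\mathcal{R}$.

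For the direction that recognition implies gluing, I would invoke \cref{lem:1:trans:imply:adj}, which leaves two possibilities. Either no divergence occurs, in which case $\alpha=\beta$ and the pair is trivially glued; or the coordinates first differ at some index $m$, where exactly one Type $1$ transition fires and is thereafter followed only by Type $2$ transitions, since there is no transition from $Q_1$ to $Q_0$ and a Type $1$ transition can fire at most once. Then \cref{lem:1:trans:imply:adj} gives adjacency at level $m$ and \cref{lem:2:trans:imply:adj} gives adjacency of $\alpha_1\dots\alpha_k$ and $\beta_1\dots\beta_k$ in $E_k$ for every $k\ge m$. In particular these prefixes are incident on a common vertex for all large $n$, so $\alpha\sim\beta$ by \cref{def:gluing}.

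Conversely, assume $\alpha\sim\beta$; the case $\alpha=\beta$ is handled by \cref{lem:0:trans}, so suppose the coordinates diverge first at index $m$. Using the fact recorded after \cref{def:gluing} --- that two distinct glued sequences are incident at \emph{every} level from their divergence onward, not merely for large $n$ --- \cref{lem:adj:imply:1:trans} shows that $(\alpha_1,\beta_1)\dots(\alpha_m,\beta_m)$ is recognized via Type $0$ transitions and a single Type $1$ transition, ending in a state that encodes the adjacency at level $m$. Since the prefixes remain adjacent at each higher level, I would then apply \cref{lem:adj:imply:2:trans} inductively: each step lengthens the recognized prefix by one Type $2$ transition whose target again encodes adjacency, so the induction never stalls and produces an infinite accepting run.

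The step I expect to be the crux is the converse, specifically the passage from ``incident for all large $n$'' in \cref{def:gluing} to ``incident at every level past divergence'', which is exactly what primes \cref{lem:adj:imply:2:trans} at level $m$ and keeps it applicable at each subsequent step. This is the monotonicity of incidence under expansion --- incidence of two edges at a finer level forces incidence of their parents at the coarser level --- which is precisely the observation following \cref{def:gluing}. Once this is invoked, the inductive construction has no gaps, and the remaining verification is routine bookkeeping distributed across the Lemmas.
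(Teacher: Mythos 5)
Your proposal is correct and follows essentially the same route as the paper's proof: the same split into the two directions, the same assembly of the five Lemmas (\cref{lem:0:trans,lem:1:trans:imply:adj,lem:adj:imply:1:trans,lem:2:trans:imply:adj,lem:adj:imply:2:trans}), and the same promotion of ``incident for all large $n$'' to ``incident at every level past divergence'' via the observation following \cref{def:gluing}. The only cosmetic difference is that you verify membership in $\Omega_\mathcal{R}$ by noting directly that every run projects to two walks on the color graph, whereas the paper packages the same fact as an explicit graph morphism $\phi \colon \mathrm{Gl}_\mathcal{R} \to A_\mathcal{R}$ together with a symmetry remark for the second coordinate.
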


\begin{proof}
We have to prove that $(x_0, y_0) (x_1, y_1) \ldots$ is recognized by $\mathrm{Gl}_\mathcal{R}$ if and only if the sequences $x_0 x_1 \ldots$ and $y_0 y_1 \ldots$ are equivalent under the gluing relation $\sim$.

\medskip

Suppose $v = x_0 x_1 \dots$ and $w = y_0 y_1 \dots$ are equivalent.
If $v = w$ then $\mathrm{Gl}_\mathcal{R}$ recognizes the sequence $(v,w)$ by \cref{lem:0:trans}, so we are done.
Suppose that instead $v \neq w$ and let $m \geq 0$ be the smallest natural number such that $x_m \neq y_m$.
Since $v \sim w$, by \cref{def:gluing} there exists a unique vertex on which $x_0 \ldots x_k$ and $y_0 \ldots y_k$ are incident as edges of $E_k$, for all $k > m$ (for $k=m$ the edges may be parallel, in which case there may be two such vertices).
%In particular, $x_0 \ldots x_m$ and $y_0 \ldots y_m$ are adjacent, so by \cref{lem:adj:imply:1:trans} the prefix $(x_0, y_0) \ldots (x_m, y_m)$ is recognized by $\mathrm{Gl}_\mathcal{R}$ and its run ends with a Type 1 transition.
%Then the state of $\mathrm{Gl}_\mathcal{R}$ after having processed the prefix $(x_0, y_0) \ldots (x_m, y_m)$ is $q_1 \big(\begin{smallmatrix} \mathrm{col}(x_m) & \alpha \\ \mathrm{col}(y_m) & \beta \end{smallmatrix}\big)$, which describes the type of adjacency between $x_m$ and $y_m$ in $\Gamma_{\mathrm{col}(x_{m-1})} = \Gamma_{\mathrm{col}(y_{m-1})}$.
In particular, $x_0 \ldots x_m$ and $y_0 \ldots y_m$ are adjacent, so by \cref{lem:adj:imply:1:trans} the prefix $(x_0, y_0) \ldots (x_m, y_m)$ is recognized by $\mathrm{Gl}_\mathcal{R}$ and its run ends with a Type 1 transition to the state $q_1 \big(\begin{smallmatrix} \mathrm{col}(x_m) & \alpha \\ \mathrm{col}(y_m) & \beta \end{smallmatrix}\big)$, which describes the type of adjacency between $x_1 \dots x_m$ and $y_0 \dots y_m$ in $E_m$.

For all of the subsequent prefixes of our sequence $(v,w)$ we are in the case described in \cref{lem:adj:imply:2:trans}, which shows that there is a Type 2 transition from any of those prefixes to the next one, ultimately showing that $(v,w)$ is recognized by $\mathrm{Gl}_\mathcal{R}$.

\medskip

Conversely, suppose that $(v, w) = (x_0, y_0) (x_1, y_1) \ldots$ is recognized by $\mathrm{Gl}_\mathcal{R}$.
We need to show that $v$ and $w$ belong to the symbol space $\Omega_\mathcal{R}$ and that $ v \sim w $.

%Let $\phi: \mathrm{Gl}_\mathcal{R} \to A_\mathcal{R}$ be the morphism the underlying of automata defined by the mapping the states as follows:
Consider the morphism $\phi$ from the underlying graph of $\mathrm{Gl}_\mathcal{R}$ to the color graph $A_\mathcal{R}$ defined by mapping the states as follows:
    \[ q_0(i) \text{ is mapped to } q_0(i), \]
    \[ q_1 \big(\begin{smallmatrix} \mathrm{col}(a) & \alpha \\ \mathrm{col}(b) & \beta \end{smallmatrix}\big) \text{ is mapped to } q_0(\mathrm{col}(a)); \]
and by mapping the transitions as follows:
    \[ q_0(i) \xrightarrow{(e,e)} q_0 (\mathrm{col}(e)) \text{ is mapped to } q_0(i) \rightarrow q_0 (\mathrm{col}(e)), \]
    \[ q_0(i) \xrightarrow{(a,b)} q_1 \big(\begin{smallmatrix} \mathrm{col}(a) & \alpha \\ \mathrm{col}(b) & \beta \end{smallmatrix}\big) \text{ is mapped to } q_0(i) \rightarrow q_0 (\mathrm{col}(a)), \]
    \[ q_1 \big(\begin{smallmatrix} i & \gamma \\ j & \delta \end{smallmatrix}\big) \xrightarrow{(a,b)} q_1 \big(\begin{smallmatrix} \mathrm{col}(a) & \alpha \\ \mathrm{col}(b) & \beta \end{smallmatrix}\big) \text{ is mapped to } q_0(i) \rightarrow q_0 (\mathrm{col}(a)). \]
By definition of Type 1 and 2 transitions, in the schemes above $a$ is an edge of the graph $\Gamma_i$.
Thus, $\phi(\mathrm{Gl}_\mathcal{R})$ is a subgraph of the color graph, so $\phi: \mathrm{Gl}_\mathcal{R} \to A_\mathcal{R}$ is a well-defined graph morphism.
In fact, it is easy to see that $\phi(\mathrm{Gl}_\mathcal{R})$ is precisely $A_\mathcal{R}$.
Thus, if $(v,w)$ is recognized by $\mathrm{Gl}_\mathcal{R}$ then $v$ is in the edge shift $\Omega_\mathcal{R}$ of the color graph.
The same holds for the entry $w$, since $\mathrm{Gl}_\mathcal{R}$ is symmetric under switching inputs $(a,b)$ with $(b,a)$ and states $q_i \big(\begin{smallmatrix} i & \gamma \\ j & \delta \end{smallmatrix}\big)$ with $q_i \big(\begin{smallmatrix} j & \delta \\ i & \gamma \end{smallmatrix}\big)$.

We now show that $v \sim w$.
If $v = w$ there is nothing to prove, so let $v \neq w$.
By \cref{lem:1:trans:imply:adj}, the run of $(v, w)$ features finitely many Type 0 transitions followed by a Type 1 transition at $(x_0, y_0), \dots (x_m, y_m)$, for some $m \geq 0$, that brings us to a state of $Q_1$, and $x_0 \dots x_m$ is adjacent to $y_0 \dots y_m$.
This shows that the prefixes of $v$ and $w$ of equal length $k \leq m+1$ represent adjacent edges.
Since the transitions of the gluing automaton that start in $Q_1$ are of Type 2 and end in $Q_1$, every subsequent transition of the run of $(v, w)$ is a Type 2 transition.
By \cref{lem:2:trans:imply:adj}, the prefixes of $v$ and $w$ of equal length $k > m+1$ are adjacent, concluding our proof.
\end{proof}

%%%%%%%%%%%%%%%%%%%%%%%%%

\section*{Acknowledgements}

The authors are thankful to Francesco Matucci for helpful advice, to Jim Belk and Bradley Forrest for kindly providing the image used in \cref{fig:basilica} and to the anonymous referee for insightful observations that led to clearer exposition.

%%%%%%%%%%%%%%%%%%%%%%%%%

\printbibliography[heading=bibintoc]

\end{document}